\documentclass{amsart}
\usepackage{amsmath,dsfont}
\usepackage{amsthm}
\usepackage{amssymb}
\usepackage{mathrsfs}
\usepackage{graphicx}
\usepackage{enumerate}
\usepackage{tikz}
\usepackage[colorlinks,
linkcolor=red,
anchorcolor=blue,
citecolor=green
]{hyperref}
\newtheorem{theorem}{Theorem}[section]

\newtheorem{prop}{Proposition}
\newtheorem{lemma}[theorem]{Lemma}
\newtheorem{remark}[theorem]{Remark}

\numberwithin{equation}{section}
\setlength{\parskip}{0.15em}
\bibliographystyle{plain}
\title[Spectral restriction bounds for orthonormal systems]{Sharp Spectral-Cluster Restriction Bounds for Orthonormal Systems
}
\author{Changbiao Jian}
\address{School of Mathematics and Statistics, Guangxi Normal University, Guilin, Guangxi 541004, PR China}
\email{bobjian1@gmail.com}
\author{Xing Wang}
\address{School of Mathematics, Hunan University, Changsha, HN 410012, PR China}
\email{xingwang@hnu.edu.cn}
\author{Yakun Xi}
\address{School of Mathematical Sciences, Zhejiang University, Hangzhou 310027, PR China}
\email{yakunxi@zju.edu.cn}

\begin{document}
\begin{abstract}
Let $\Sigma$ be a smooth submanifold of a compact Riemannian manifold $M$.
We study the restriction to $\Sigma$ of densities generated by
$L^2(M)$-orthonormal systems of eigenfunctions whose eigenfrequencies lie in
a unit-width spectral cluster. For every $p\geq2$, we establish
$L^{p/2}(\Sigma)$ bounds that quantify the gain arising from orthogonality
and determine the optimal summability exponent for the coefficient sequence
in all dimensions and codimensions. The summability exponent is fully sharp, while the frequency dependence is
optimal modulo logarithmic factors. In the
codimension-one case when $\dim M\geq3$, the optimal summability exponent
exhibits a new, dimension-independent critical point at $p=4$, in addition
to the classical critical point $p=\frac{2d}{d-1}$. This phenomenon has no
analogue in the corresponding restriction estimates for a single
eigenfunction. We also obtain improved, essentially sharp estimates when
$\Sigma$ is a curve with nonvanishing geodesic curvature on a Riemannian
surface.
\end{abstract}
\maketitle

\section{Introduction}

Laplace--Beltrami eigenfunctions may be viewed as stationary states of a single-particle quantum system. From a mathematical perspective, one studies the distribution and concentration of their mass \cite{Berry1977,Shnirelman1974,ColinDeVerdiere1985,Zelditch1987,sogge1988,bgt2007,Zelditch2009QuantumChaos}; from a physical perspective, the corresponding questions are commonly formulated in terms of the localization and delocalization of quantum states \cite{Anderson1958,Berry1977,Heller1984Scars,EversMirlin2008}. In recent years, this viewpoint has been extended to interacting many-body systems, where many-body localization and many-body quantum scars provide two distinct mechanisms for nonthermal behavior \cite{BaskoAleinerAltshuler2006,AbaninAltmanBlochSerbyn2019,BernienEtAl2017,TurnerEtAl2018}.

In free fermionic systems, the Pauli exclusion principle requires the occupied single-particle states to be mutually orthogonal, and the associated one-particle density is  
$$  
\rho(x)=\sum_j |f_j(x)|^2.  
$$ 
Densities formed by summing over orthogonal states are also the fundamental objects appearing in the Lieb--Thirring inequalities. These inequalities control an integral power of the density by the total kinetic energy of the system, thereby quantitatively expressing how the Pauli exclusion principle suppresses excessive spatial concentration of fermions \cite{LiebThirring1975,LiebThirring1976,Frank2023Orthonormal}. In a narrow high-energy frequency window, spectral clusters therefore provide a natural setting in which to study such orthogonal states. This leads to a question fundamentally different from estimates for a single eigenfunction: to what extent can multiple mutually orthogonal high-frequency states concentrate simultaneously along the same submanifold, and how strongly does orthogonality suppress such collective concentration?

We first recall the corresponding restriction estimates for a single eigenfunction. Let $(M,g)$ be a compact boundaryless Riemannian manifold of dimension $d\geq2$, and let $\Delta_g$ be the corresponding Laplace--Beltrami operator on $M$.  Let $e_\lambda$ be an $L^2$-normalized eigenfunction of $\Delta_g$, that is,
 \[-\Delta_g e_\lambda=\lambda^2e_\lambda,\quad \text{and }\int_M |e_\lambda|^2\, dV_g=1.\]

Burq--G\'erard--Tzvetkov \cite{bgt2007} established $L^p$ estimates for the restriction of eigenfunctions to a submanifold. In what follows, let $\Sigma\subset M$ be a smoothly embedded submanifold of dimension $k$. Define
${\delta(k,d,p)}$ by
\begin{align*}
	&\delta(k,d,p)=\begin{cases}
		\dfrac{d-1}{4}-\dfrac{d-2}{2p},       &\text{ if }k=d-1,\; 2\leq p \le \dfrac{2d}{d-1} ,\\
		\dfrac{d-1}{2}-\dfrac{d-1}{p},    &\text{ if }k=d-1,\; \dfrac{2d}{d-1}< p\leq\infty,\\
		\dfrac{d-1}{2}-\dfrac{k}{p},     &\text{ if }1\leq k\leq d-2\text{ and } 2\leq p\leq\infty.
	\end{cases}
\end{align*}
Burq--G\'erard--Tzvetkov proved the sharp restriction bound
\begin{equation}\label{lp restriction}\|e_\lambda\|_{L^p(\Sigma)}\le C\lambda^{{\delta(k,d,p)}}\|e_\lambda\|_{L^2(M)},\end{equation}
up to an additional loss of $(\log\lambda)^{\frac{1}{2}}$ when
$(p,k)=(\frac{2d}{d-1},d-1)$ or $(p,k)=(2,d-2)$.
 
 Hu \cite{hu2009} subsequently gave another proof of \eqref{lp restriction} and removed the $\log$ loss for the case $(p,k)=(\frac{2d}{d-1},d-1)$. More recently, the second author and Zhang \cite{wang2021} removed the $\log$ loss for totally geodesic submanifolds and curves with nonvanishing curvature in the $(p,k)=(2,d-2)$ case. The bounds \eqref{lp restriction} have been improved and generalized under various geometric assumptions and operator settings; see, e.g., \cite{chen2014,xi2017,zhang2017,blair2018,xi2019,HWZ2026Schrodinger,gao2024refined}. Moreover, certain $L^p(\Sigma)$ restriction bounds have been connected to the $L^p(M)$ norms of eigenfunctions via Kakeya--Nikodym norms; see, e.g., \cite{bourgain2009,sogge2011,bs2015,bs2015b,miao2016,bs2017}.

{For $\lambda\ge2$, let
\[
I_\lambda=[\lambda,\lambda+1),\qquad
\Pi_\lambda=\mathds{1}_{I_\lambda}(\sqrt{-\Delta_g}),
\]
and let \[E_\lambda=\operatorname{Ran}\Pi_\lambda\]
denote the associated unit-width spectral cluster.
Let $\{f_j\}_{j\in J}\subset E_\lambda$ be an orthonormal system of
eigenfunctions, with eigenfrequencies in $I_\lambda$, and let
$\{t_j\}_{j\in J}\subset\mathbb C$ be finitely supported.

To state our result uniformly, define
\[
\alpha(k,d,p):=
\begin{cases}\max\left\{2,\dfrac{p}{2}\right\},
& d=2,\ k=1,\\[6pt]
\dfrac{p}{2},
& d\geq3,\ 1\leq k\leq d-2,\\[6pt]
\dfrac{2p}{p+2},
& d\geq3,\ k=d-1,\ 
2\leq p<\dfrac{2d}{d-1},\\[8pt]
\dfrac{2p(d-2)}{4d-p-4},
& d\geq3,\ k=d-1,\ 
\dfrac{2d}{d-1}\leq p\leq4,\\[8pt]
\dfrac{p}{2},
& d\geq3,\ k=d-1,\ 4<p\leq\infty,
\end{cases}
\]
and
\[
h(k,d,p):=
\begin{cases}
\min\left\{\dfrac12,\dfrac2p\right\},
& d=2,\ k=1,\\[8pt]
0,
& d\geq3,\ 1\leq k\leq d-3,\\[4pt]
\dfrac{2}{p},
& d\geq3,\ k=d-2,\\[8pt]

0,
& d\geq3,\ k=d-1,\ 
2\leq p<\dfrac{2d}{d-1},\\[8pt]
\dfrac{2d-p}{p(d-2)},
& d\geq3,\ k=d-1,\ 
\dfrac{2d}{d-1}\leq p\leq4,\\[8pt]
\dfrac2p,
& d\geq3,\ k=d-1,\ 4<p\leq\infty.
\end{cases}
\]

\begin{theorem}\label{main theorem}
Suppose that $d\geq2$ and $1\leq k\leq d-1$. For every
$p\in[2,\infty]$ and $1\leq\alpha\leq\alpha(k,d,p)$, there exists a
constant $C_\alpha>0$ such that
\begin{equation}\label{main estimate}
\Biggl\|\sum_{j\in J}t_j|f_j|^2\Biggr\|_{L^{p/2}(\Sigma)}
\leq
C_\alpha\,(\log\lambda)^{h(k,d,p)}
\lambda^{2\delta(k,d,p)}\,
\|\{t_j\}\|_{l^\alpha(J)}.
\end{equation}
The exponent $\alpha(k,d,p)$ is sharp for every admissible choice of
$k$, $d$, and $p$, while the polynomial power
$\lambda^{2\delta(k,d,p)}$ is optimal modulo logarithmic factors.

Moreover, the logarithmic factor can be removed for
every $1\leq\alpha<\alpha(k,d,p)$ if $d\geq3$, $k=d-2$ and $p>2$, if
$d=2$, $k=1$, and $p\neq4$, or if $d\geq3$, $k=d-1$, and $p>4$.
\end{theorem}

As illustrated by Figures~\ref{fig 1} and~\ref{fig 2}, the hypersurface
case displays a substantially richer structure than the higher-codimension
case, particularly when $d\geq3$. The transition at
$p=\frac{2d}{d-1}$ reflects the familiar kink in the
single-eigenfunction restriction exponent. More strikingly, the
orthonormal problem exhibits a second, dimension-independent transition at
$p=4$. Our sharpness constructions show that this new critical point arises
from the competition between two distinct many-state concentration
mechanisms: an orthonormal family whose combined density fills the
hypersurface and a family of states localized in $\lambda^{-1}$-scale
neighborhoods of separated points on the hypersurface. The former gives the
stronger obstruction below $p=4$, while the latter dominates above $p=4$,
and the two constraints coincide exactly at $p=4$. This additional kink has
no analogue in the classical restriction estimate for a single
eigenfunction and is one of the main new phenomena revealed by our results.}

\begin{figure}
	\begin{tikzpicture}[samples=200,scale=1.0]
	\coordinate (O) at (0,0);
	\coordinate (D) at (2.5,0);
	\coordinate (A) at (0,2.5);
	\coordinate (B) at (0,2.5);
	\coordinate (H) at (0,5);
	\coordinate (J) at (5,0);
	\coordinate (K) at (2.5,2.5);
        \coordinate (Z) at (2.5,5);
	\coordinate (M) at (5,2.5);
		\coordinate (W) at (5,5);
      \coordinate (P) at (0,5);
	\coordinate (XMAX) at (6,0);
	\coordinate (XMIN) at (-1,0);
	\coordinate (YMAX) at (0,6);
	\coordinate (YMIN) at (0,-1);

	\draw[->,>=stealth] (XMIN)--(XMAX) node [below] {$\frac{1}{p}$};
	\draw[->,>=stealth] (YMIN)--(YMAX) node [below left] {};

	\draw [red] (K)--(H)node [left] {$\frac{1}{2}$};
	\draw [red] (K)--(M);
	\draw [blue] (O)--(Z);
 \draw [blue] (W)--(Z);

\draw [dashed] (Z)--(P) ;
	\draw [dashed] (K)--(A) node [left,red] {{$\frac{1}{4}$}};
	\draw [dashed] (M)--(J) node [below] {{$\frac{1}{2}$}};
	\draw [dashed] (Z)--(D) node [below] {{$\frac{1}{4}$}};
		\draw [dashed] (K)--(B) node [left,red] {{$\frac{1}{4}$}};
	
   \draw [dashed] (K)--(J);
			\draw [dashed] (W)--(J) ;

				\draw (W) node [right,blue] {{$1/\alpha(1,2,p)$}};
				\draw (M) node [right,red] {{$\delta(1,2,p)$ }};
	\coordinate(O) at (0,0);
	\end{tikzpicture}
\caption{$\delta(1,2,p)$ and $1/\alpha(1,2,p)$.}\label{fig 1}
\end{figure}
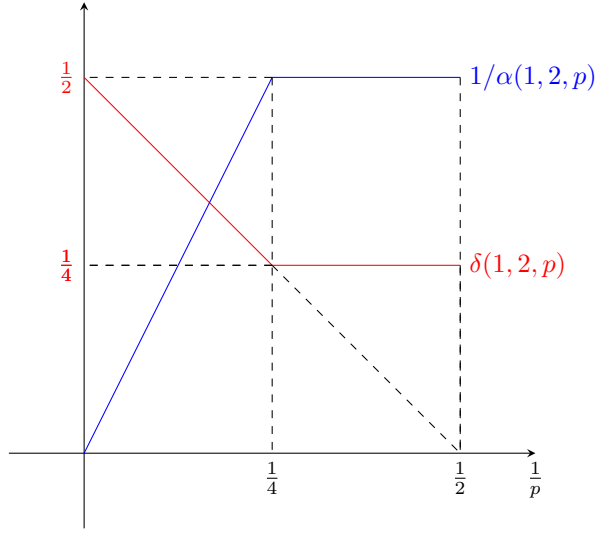

\begin{figure}
\begin{tikzpicture}[samples=200,scale=1.0]
    \coordinate (O) at (0,0);
    \coordinate (XMIN) at (-1,0);
    \coordinate (XMAX) at (6,0);
    \coordinate (YMIN) at (0,-1);
    \coordinate (YMAX) at (0,6);

    \coordinate (Pinf) at (0,0);          
    \coordinate (Pfour) at (2.5,0);       
    \coordinate (Pthree) at (3.333,0);   
    \coordinate (Ptwo) at (5,0);         

    \coordinate (Yquarter) at (0,1.25);   
    \coordinate (Ythird) at (0,1.666);    
    \coordinate (Yhalf) at (0,2.5);       
    \coordinate (Yfive6) at (0,4.166);   
    \coordinate (Yone) at (0,5);         

    \coordinate (R0) at (0,5);            
    \coordinate (R1) at (3.333,1.666);    
    \coordinate (R2) at (5,1.25);         

    \coordinate (B0) at (0,0);           
    \coordinate (B1) at (2.5,2.5);        
    \coordinate (B2) at (3.333,4.166);    
    \coordinate (B3) at (5,5);           
    \draw[->,>=stealth] (XMIN)--(XMAX) node [below] {$\frac{1}{p}$};
    \draw[->,>=stealth] (YMIN)--(YMAX);

    \draw[red] (R0)--(R1)--(R2);

    \draw[blue] (B0)--(B1)--(B2)--(B3);

    \draw[dashed] (Pfour)--(B1);
    \draw[dashed] (Pthree)--(B2);
    \draw[dashed] (Ptwo)--(B3);

    \draw[dashed] (Yquarter)--(R2);
    \draw[dashed] (Ythird)--(R1);
    \draw[dashed] (Yhalf)--(B1);
    \draw[dashed] (Yfive6)--(B2);
    \draw[dashed] (Yone)--(R0);

    \draw (Pfour) node [below] {$\frac{1}{4}$};
    \draw (Pthree) node [below] {$\frac{1}{3}$};
    \draw (Ptwo) node [below] {$\frac{1}{2}$};

    \draw (Yquarter) node [left,red] {$\frac{1}{4}$};
    \draw (Ythird) node [left,red] {$\frac{1}{3}$};
    \draw (Yhalf) node [left,blue] {$\frac{1}{2}$};
    \draw (Yfive6) node [left,blue] {$\frac{5}{6}$};
    \draw (Yone) node [left] {$1$};

    \draw (R0) node [right,red] {$\delta(2,3,p)$};
    \draw (B3) node [right,blue] {$1/\alpha(2,3,p)$};

\end{tikzpicture}
\caption{$\delta(2,3,p)$ and $1/\alpha(2,3,p)$, exhibiting the additional
critical point at $p=4$.}\label{fig 2}
\end{figure}

At the optimal endpoint, the summability exponent is $\alpha(k,d,p)$,
which equals $p/2$ in codimension at least two. Since
$\alpha(k,d,p)>1$ for $p>2$, Theorem~\ref{main theorem} yields a genuine
gain over the trivial estimate obtained by applying the triangle inequality
and the single-eigenfunction restriction bound to each $f_j$ separately.

In the regimes where $\alpha(k,d,p)=p/2$, our estimate also has a natural
vector-valued interpretation. Consider any orthogonal, but not necessarily
normalized, family of eigenfunctions $\{f_j\}_{j\in J}\subset E_\lambda$. Our results imply
\footnote{Possibly modulo a $\log\lambda$--loss.}
\begin{equation}\label{Minkowski}
\Big\|\|\{f_j\}\|_{l^2(J)}\Big\|_{L^p(\Sigma)}
\le C\,\lambda^{\delta(k,d,p)}\,
\Big\|\{\|f_j\|_{L^2(M)}\}\Big\|_{l^p(J)}.
\end{equation}
Thus, in these regimes, Theorem \ref{main theorem} allows us
to reverse the order of the $L^p(\Sigma)$ and $l^p(J)$ norms. By contrast,
a direct application of Minkowski's inequality together with
\eqref{lp restriction} gives
\begin{equation}\label{Minkowski2}
\Big\|\|\{f_j\}\|_{l^2(J)}\Big\|_{L^p(\Sigma)}
\le C\,\lambda^{\delta(k,d,p)}\,
\Big\|\{\|f_j\|_{L^2(M)}\}\Big\|_{l^2(J)}.
\end{equation}
Clearly \eqref{Minkowski} improves \eqref{Minkowski2} for all $p>2$.

Our sharpness constructions apply in all dimensions and codimensions. In the
surface case $d=2$ and $k=1$, Section~\ref{sec 6} gives a more refined
description: on the standard two-sphere, the estimate is saturated for
orthonormal systems of every cardinality
$\#J\sim\lambda^\beta$, $0\leq\beta\leq1$, up to logarithmic factors.

In the case $d=2$, when $\Sigma$ is a curve parametrized by arc-length, Burq--Gérard--Tzvetkov \cite{bgt2007} obtained improved bounds for $2\le p\le4$ provided the geodesic curvature of $\Sigma$ is nonvanishing, i.e.
\[
    \langle \nabla_{\Sigma'(s)}\Sigma'(s), \nabla_{\Sigma'(s)}\Sigma'(s)\rangle \ne 0 \quad \text{for all } s.
\]
They prove that, in this case,
\begin{equation}\label{eq:curved}
    \|e_\lambda\|_{L^p(\Sigma)}
    \le C (1+\lambda)^{\frac{1}{3}-\frac{1}{3p}}
    \|e_\lambda\|_{L^2}.
\end{equation}
As a direct consequence of \eqref{eq:curved} with $p=2$,
\eqref{main estimate} with $p=4$, Minkowski's inequality, and interpolation,
we obtain the following essentially sharp result for orthonormal systems.

\begin{theorem}\label{nonvanishing geodesic curvature}
Suppose that $d=2$ and $k=1$. Let $\Sigma$ be a curve with nonvanishing geodesic curvature. Then for any $p\in[2,4]$ there exists $C>0$ such that
\begin{equation}\label{gf22}
    \Biggl\|\sum_{j\in J} t_j\,|f_j|^{2}\Biggr\|_{L^{p/2}(\Sigma)}
    \le
    C\,
    (\log\lambda)^{1-\frac2p}
    \lambda^{\frac23-\frac2{3p}}\,
    \|\{t_j\}\|_{l^{p/2}(J)}.
\end{equation}
Moreover, \eqref{gf22} is essentially sharp when $M$ is the standard sphere $\mathbb{S}^2$.
\end{theorem}

Our approach combines the operator-theoretic framework of Frank and Sabin with
the oscillatory-integral methods of Burq--G\'erard--Tzvetkov. Following Frank
and Sabin \cite{frank2017,sabin2017}, we use a duality principle to reduce
estimates for orthonormal systems to Schatten norm bounds for suitable weighted
operators. We then apply Sogge's reproducing-operator reduction
\cite{sogge2017}, expressing the localized spectral projector through the
half-wave propagator and its local parametrix. This places the problem in the
oscillatory-integral framework developed by Burq--G\'erard--Tzvetkov
\cite{bgt2007}.

For submanifolds of codimension at least two, a $TT^*$ argument and a dyadic
decomposition of the resulting kernel yield the required Schatten bounds.
The logarithmic losses arise from borderline kernel or dyadic summations,
while the logarithm-free non-endpoint estimates follow from interpolation
with appropriate Schatten class bounds.

This paper is organized as follows. Section \ref{sec 2} recalls the definition
of Schatten classes and the duality principle from \cite{sabin2017}.
Section \ref{sec 3} contains the standard reduction to oscillatory integral
operators. Sections \ref{sec 4} and \ref{sec 5} prove
Theorem \ref{main theorem}. Sections \ref{sec 6} and \ref{sec 7} establish
the sharpness assertions in Theorem \ref{main theorem}. Finally,
Section \ref{sec 8} proves the sharpness statement in
Theorem \ref{nonvanishing geodesic curvature}.

\subsection*{Notation.} Throughout this paper, the symbol $C$ denotes a positive constant whose value may vary from line to line. We write $A\lesssim B$ if
$A\le CB$ for a constant independent of $\lambda$ and of the functions under
consideration; dependence on fixed data is suppressed. We write $A\sim B$ if
$A\lesssim B$ and $B\lesssim A$. For brevity, whenever $k$ and $d$ are clear from the context, we abbreviate
$\delta(k,d,p)$, $\alpha(k,d,p)$, and $h(k,d,p)$ by
$\delta(p)$, $\alpha(p)$, and $h(p)$, respectively.

\subsection*{Acknowledgements}
This project is supported by the National Key Research and Development
Program of China No. 2022YFA1007200. X. W. is partially supported by the Fundamental Research Funds for the Central Universities Grant No. 531118010864. Y. X. is partially supported by Zhejiang Provincial Natural Science Foundation of China under
Grant No. LR25A010001, and NSF China Grant No. 12571107 and 12171424. The authors would like to thank Rupert Frank and Julien Sabin for their kind communications and for pointing out a gap in an earlier version of the paper.

\section{Schatten classes}\label{sec 2}		
In this section, we recall some fundamental properties of Schatten class operators and review a duality principle established by Frank and Sabin.
Let $\mathfrak{H}$ and $\mathfrak{K}$ be complex, separable Hilbert spaces, and denote by $\mathfrak{B}_0(\mathfrak{H, \mathfrak{K}})$  the space of compact linear operators from $\mathfrak{H}$ to $\mathfrak{K}$. For $T\in \mathfrak{B}_0(\mathfrak{H, \mathfrak{K}})$, the operator 
$|T|:=(T^*T)^{1/2}$ is compact and positive. Its eigenvalues, which are non-negative, are called the singular values of $T$, and are arranged in decreasing order as $\sigma_1\geq\sigma_2\geq\cdots\geq0$. For $1\leq p\leq\infty $, the Schatten class $\mathfrak{S}^p(\mathfrak{H},\mathfrak{K})$ consists of all compact operators $T$ whose singular value(counted according to multiplicity) form a sequence in $l^p$. Naturally, the Schatten norm of $T$ is defined by the $l^p$ norm of the singular value sequence, that is,
\[\big\|T\big\|_{\mathfrak{S}^p(\mathfrak{H},\mathfrak{K})}:=\Bigl(\sum_{j\geq1}\sigma^p_j\Bigr)^{1/p},\]
with the standard modification when $p=\infty.$
  When $\mathfrak{H}=\mathfrak{K}$, we write $\mathfrak{S}^p(\mathfrak{H}):=\mathfrak{S}^p(\mathfrak{H},\mathfrak{H})$ and $\mathfrak{B}_0(\mathfrak{H}):=\mathfrak{B}_0(\mathfrak{H},\mathfrak{H})$. For further details, we refer the reader to Simon’s monograph \cite{simon2005}.
  
 We will require the following lemma from \cite[Theorem 2.7]{simon2005}.
  \begin{lemma}[Theorem 2.7 \cite{simon2005}]\label{z1}
    Let $A$ and $C$ be bounded operators on $\mathfrak{H}.$ Then for all $B\in\mathfrak{B}_0(\mathfrak{H})$ and $1\leq p\leq\infty$, we have
$$\|ABC\|_{\mathfrak{S}^p(\mathfrak{H})}\leq\|A\|\|C\|\|B\|_{\mathfrak{S}^p(\mathfrak{H})}.$$
Here $\|{}\cdot{}\|$ denotes the operator norm on $\mathfrak H.$
  \end{lemma}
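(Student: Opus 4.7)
The plan is to reduce the Schatten-$p$ bound to a pointwise inequality on singular values. Since for any compact operator $T$ the Schatten norm agrees with the $\ell^p$ norm of the singular value sequence, namely $\|T\|_{\mathfrak{S}^p(\mathfrak H)} = \|\{\sigma_n(T)\}\|_{\ell^p}$, it suffices to prove the termwise bound
\begin{equation}\label{plan-sv}
\sigma_n(ABC) \le \|A\|\,\|C\|\,\sigma_n(B), \qquad n\ge 1.
\end{equation}
Once \eqref{plan-sv} is in hand, raising to the $p$-th power and summing in $n$ (with the essential-supremum modification when $p=\infty$, which is the standard submultiplicativity of the operator norm) yields the stated inequality.

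To establish \eqref{plan-sv} I would invoke the min-max characterization
\[
\sigma_n(T)=\inf\bigl\{\|T-R\| \,:\, \operatorname{rank}(R)\le n-1\bigr\},
\]
valid for any compact $T$. Given $\varepsilon>0$, pick a rank-$(n-1)$ operator $R$ with $\|B-R\|\le \sigma_n(B)+\varepsilon$. Then $AR$ also has rank at most $n-1$, so
\[
\sigma_n(AB)\le \|AB-AR\|\le \|A\|\,\|B-R\|\le \|A\|\,(\sigma_n(B)+\varepsilon),
\]
and letting $\varepsilon\downarrow 0$ gives $\sigma_n(AB)\le \|A\|\,\sigma_n(B)$. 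For the factor $C$ on the right I would use the identity $\sigma_n(T)=\sigma_n(T^*)$ (since $|T^*|$ and $|T|$ share the same non-zero spectrum): applying the left inequality to $C^*B^*$ yields $\sigma_n(BC)=\sigma_n(C^*B^*)\le \|C^*\|\,\sigma_n(B^*)=\|C\|\,\sigma_n(B)$. Chaining the two bounds produces \eqref{plan-sv}.

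No serious obstacle is expected; the routine verifications are that $AB$, $BC$, and $ABC$ all remain compact (immediate from the two-sided ideal property: compact operators form a norm-closed ideal in $\mathfrak B(\mathfrak H)$), so their singular value sequences are well-defined and decreasing to zero, which legitimizes the min-max formula used above. The lemma is standard and the argument merely reprises the proof given in Simon's monograph \cite{simon2005}.
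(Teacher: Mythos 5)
Correct. The paper does not prove this lemma—it simply cites Theorem 2.7 of Simon's monograph—and your argument (reducing to the singular-value inequality $\sigma_n(ABC)\le\|A\|\,\|C\|\,\sigma_n(B)$ via the best rank-$(n-1)$ approximation characterization, handling the right factor by passing to adjoints) is precisely the standard proof given there, with all steps valid.
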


We now review a well-known duality principle for Schatten class operators, as presented in \cite[Lemma 3]{sabin2017}.
\begin{lemma}[Duality principle]\label{l1}
	Let $\mathfrak{H}$ be a separable Hilbert space. For $2\leq p\leq \infty$, $\alpha\geq1$, with $1/\alpha+1/\alpha'=1/p+1/p'=1$, suppose that $T$ is a bounded operator from $\mathfrak{H}$ to $L^{p}(\mathbb{R}^d)$. Then the following are equivalent.
	\begin{enumerate}[(1)]
		\item  There exists a constant $C>0$ such that
		\begin{equation}\label{a1}
\Big\|WTT^*\overline{W}\Big\|_{\mathfrak{S}^{\alpha'}(L^2(\mathbb{R}^d))}\leq C\|W\|^2_{L^{2p/(p-2)}(\mathbb{R}^d)},     \text{for all }W\in L^{2p/(p-2)}(\mathbb{R}^d,\mathbb{C}).
		\end{equation}
        
		\item  For any orthonormal system $\{f_j\}_{j\in J}$ in
		$\mathfrak{H}$ and any sequence $\{t_j\}_{j\in J}\subset \mathbb{C}$, there is a constant $C'$ such that
		\begin{equation}\label{a2}
		\Biggl\|\sum_{j\in J}t_j|Tf_j|^2\Biggr\|_{L^{p/2}(\mathbb{R}^d)}\leq C'\Biggl(\sum_{j\in J}|t_j|^{\alpha}\Biggr)^{1/\alpha},
		\end{equation}
	 Moreover, the values of the optimal constants $C$ and $C'$ coincide. 
	\end{enumerate} 
\end{lemma}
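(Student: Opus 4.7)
The plan is to follow the classical Schatten-duality framework used by Frank and Sabin, combining two pairings: the Schatten trace pairing $\mathfrak{S}^{\alpha'}$--$\mathfrak{S}^{\alpha}$ and the H\"older pairing $L^{p/2}$--$L^{(p/2)'}$. The algebraic hinge is the identity
\[
\|(WT)(WT)^{*}\|_{\mathfrak{S}^{\alpha'}(L^{2})}
=\|(WT)^{*}(WT)\|_{\mathfrak{S}^{\alpha'}(\mathfrak{H})},
\]
valid because the two operators share the same non-zero singular values. This lets me move freely between operators on $L^{2}(\mathbb{R}^{d})$ (which appear in (1)) and operators on $\mathfrak{H}$ built from the $f_{j}$ (which appear in (2)). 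With $W\in L^{2p/(p-2)}$ and $T:\mathfrak{H}\to L^{p}$ bounded, H\"older makes $WT:\mathfrak{H}\to L^{2}$ bounded, so everything is at least well-defined; compactness is automatic once either Schatten norm is finite.

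For $(1)\Rightarrow(2)$, I fix a finite orthonormal system $\{f_{j}\}\subset\mathfrak{H}$ and coefficients $\{t_{j}\}$. The pointwise estimate $|\sum t_{j}|Tf_{j}|^{2}|\le\sum|t_{j}||Tf_{j}|^{2}$ reduces matters to $t_{j}\ge 0$. By $L^{p/2}$-duality,
\[
\Bigl\|\sum_{j}t_{j}|Tf_{j}|^{2}\Bigr\|_{L^{p/2}}
=\sup_{V\ge 0,\ \|V\|_{L^{(p/2)'}}\le 1}\int V\sum_{j}t_{j}|Tf_{j}|^{2}\,dx.
\]
Write $V=|W|^{2}$ so that $\|W\|_{L^{2p/(p-2)}}^{2}=\|V\|_{L^{(p/2)'}}$, and let $R:=\sum_{j}t_{j}\langle\cdot,f_{j}\rangle f_{j}$ be the positive operator on $\mathfrak{H}$ with eigenvalues $\{t_{j}\}$, so that $\|R\|_{\mathfrak{S}^{\alpha}(\mathfrak{H})}=\|\{t_{j}\}\|_{\ell^{\alpha}}$. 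A direct computation converts the pairing into a trace,
\[
\int V\sum_{j}t_{j}|Tf_{j}|^{2}\,dx
=\sum_{j}t_{j}\,\langle(WT)^{*}(WT)f_{j},f_{j}\rangle_{\mathfrak{H}}
=\mathrm{Tr}\bigl((WT)^{*}(WT)\,R\bigr),
\]
and Schatten-H\"older combined with the algebraic hinge and hypothesis (1) yields
\[
\mathrm{Tr}\bigl((WT)^{*}(WT)R\bigr)
\le\|(WT)(WT)^{*}\|_{\mathfrak{S}^{\alpha'}}\|R\|_{\mathfrak{S}^{\alpha}}
\le C\|W\|_{L^{2p/(p-2)}}^{2}\|\{t_{j}\}\|_{\ell^{\alpha}}.
\]
Taking the supremum over $V$ produces (2) with $C'=C$; the extension from finite to arbitrary systems is by monotone convergence.

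For $(2)\Rightarrow(1)$, the core tool is the variational identity
\[
\|C\|_{\mathfrak{S}^{\alpha'}(\mathfrak{H})}
=\sup\Bigl\{\sum_{j}t_{j}\langle Cg_{j},g_{j}\rangle_{\mathfrak{H}}
:\{g_{j}\}\text{ ONS in }\mathfrak{H},\ \|\{t_{j}\}\|_{\ell^{\alpha}}\le 1\Bigr\},
\]
valid for any positive operator $C$ (both sides possibly $+\infty$); this follows from Hardy--Littlewood--P\'olya majorization of $\{\langle Cg_{j},g_{j}\rangle\}$ by the eigenvalues of $C$ together with H\"older in $\ell^{\alpha}$--$\ell^{\alpha'}$, and is saturated on the eigenbasis of $C$. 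Specializing to $C=(WT)^{*}(WT)$ and using $\langle Cg_{j},g_{j}\rangle=\int|W|^{2}|Tg_{j}|^{2}\,dx$, H\"older in $L^{(p/2)'}$--$L^{p/2}$ together with (2) give
\[
\|(WT)^{*}(WT)\|_{\mathfrak{S}^{\alpha'}}
\le\|W\|_{L^{2p/(p-2)}}^{2}\sup\Bigl\|\sum_{j}t_{j}|Tg_{j}|^{2}\Bigr\|_{L^{p/2}}
\le C'\|W\|_{L^{2p/(p-2)}}^{2},
\]
and the algebraic hinge converts this into (1) with $C=C'$.

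The only genuinely delicate point I anticipate is the rigorous justification of the variational identity and the trace manipulations when $(WT)^{*}(WT)$ is not a priori compact or trace-class. This is not a real obstacle: both implications can be established first for bounded, compactly supported $W$ and for finite-rank $R$, after which the full statement follows by monotone convergence and density, exactly as in~\cite{frank2017,sabin2017}.
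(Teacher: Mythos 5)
Your proof is correct: the reduction of (1)$\Rightarrow$(2) via $L^{p/2}$--$L^{(p/2)'}$ duality, the trace identity $\int |W|^2\sum_j t_j|Tf_j|^2 = \mathrm{Tr}\bigl((WT)^*(WT)R\bigr)$ with Schatten--H\"older, and the converse via the variational characterization of $\|\cdot\|_{\mathfrak{S}^{\alpha'}}$ for positive operators, together with the fact that $(WT)(WT)^*=WTT^*\overline{W}$ and $(WT)^*(WT)$ share nonzero singular values, is exactly the standard Frank--Sabin/Sabin argument. The paper itself does not prove this lemma---it quotes it as Lemma~3 of \cite{sabin2017} and merely remarks that it reduces to Lebesgue-space duality---so your write-up is a faithful reconstruction of the intended proof, and your closing remark (first treating bounded compactly supported $W$ and finite-rank $R$, then passing to the limit) adequately covers the only technical point, namely compactness and finiteness of the traces involved.
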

If one examines the proof of the above lemma in \cite{sabin2017}, one can see that this duality principle is a direct consequence of the duality between Lebesgue $L^p$ spaces.

\section{Approximate projection operators
}\label{sec 3}
In this section, we perform the standard reduction introduced by Sogge \cite{sogge2017} and collect a few key oscillatory‐integral estimates from \cite{bgt2007}.  Sogge’s idea is to consider an operator that reproduces eigenfunctions.  Fix a small constant $\epsilon_0>0$ and let $\lambda\ge2$.  Choose a Schwartz function $\chi$ such that 
\[
\chi(0)=1,\quad \chi(t)>\tfrac12\text{ for }t\in[0,1],
\quad\text{and}\quad
\widehat\chi(t)=0\quad\text{unless }|t|\in[\epsilon_0,2\epsilon_0].
\]
Define
\[
\chi_{\lambda}f
:=\chi\bigl(\sqrt{-\Delta_g}-\lambda\bigr)f
=\frac{1}{2\pi}\int_{|t|\in[\epsilon_0,2\epsilon_0]}
\bigl(e^{it\sqrt{-\Delta_g}}f\bigr)\,e^{-it\lambda}\,\widehat\chi(t)\,dt.
\]
For $\epsilon_0\ll1$ and $|t|\le2\epsilon_0$, a local‐coordinate parametrix shows that $e^{it\sqrt{-\Delta_g}}$ is a Fourier integral operator.  A stationary‐phase argument then gives the following slight variant of \cite[Lemma 5.1.3]{sogge2017}, as in \cite{bgt2007,sogge2011}.

\begin{lemma}\label{l0}
  Let $\epsilon_0>0$ be smaller than one-tenth of the injectivity radius of $(M,g)$.  
  In local coordinates,
  \[
    \chi_{\lambda}f(x)
    =\lambda^{\frac{d-1}{2}}
     \int_{M}e^{\,i\lambda\psi(x,y)}\,a_{\lambda}(x,y)\,f(y)\,dy
    +R_{\lambda}f(x),
  \]
  where
  \[
    {\rm supp\,} a_{\lambda}
    \subset
    \bigl\{(x,y)\colon\tfrac{\epsilon_0}{2}\le d_g(x,y)\le \epsilon_0\bigr\},
    \qquad
    \psi(x,y)=-d_g(x,y),
  \]
  and $a_{\lambda}\in C^{\infty}_0$ satisfies 
  \[
    |\partial_{x,y}^\alpha a_{\lambda}(x,y)|\le C_\alpha
    \quad\text{for all multi‐indices }\alpha.
  \]
  Moreover, for every $N\in\mathbb Z_+$ and $2\le p\le\infty$ there is $C_{N,p}$ so that
  \[
    \|R_{\lambda}\|_{L^2\to L^p}\le C_{N,p}\,\lambda^{-N}.
  \]
\end{lemma}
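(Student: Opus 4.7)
The plan is to carry out the standard Hadamard--Hörmander parametrix construction for the half-wave propagator $e^{it\sqrt{-\Delta_g}}$ and then apply stationary phase to the integral defining $\chi_\lambda$. Since the support of $\widehat{\chi}$ localizes the $t$-integration to $|t|\in[\epsilon_0,2\epsilon_0]$, which lies well below the injectivity radius, I would, on each coordinate patch, write
\[
(e^{it\sqrt{-\Delta_g}}f)(x)=(2\pi)^{-d}\int\!\!\int e^{i(\varphi(t,x,\xi)-y\cdot\xi)}q(t,x,\xi)f(y)\,dy\,d\xi + (E_tf)(x),
\]
where $\varphi$ is homogeneous of degree one in $\xi$ and solves the eikonal equation $\partial_t\varphi=p(x,\nabla_x\varphi)$ with $\varphi(0,x,\xi)=x\cdot\xi$ ($p$ being the principal symbol of $\sqrt{-\Delta_g}$), the symbol $q$ is a classical polyhomogeneous expansion obtained by iteratively solving the standard transport equations, and $E_t$ is a smoothing residual uniformly for $|t|\le 2\epsilon_0$.

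Next, I would substitute this representation into the definition of $\chi_\lambda$, pass to polar coordinates $\xi=r\omega$ with $r>0$ and $\omega\in S^{d-1}$, and use the homogeneity of $\varphi$ and of the leading symbol $q_0$. After rescaling $r=\lambda s$, the integral takes the form
\[
\chi_\lambda f(x)=c\,\lambda^{d}\!\!\int e^{i\lambda(s\varphi(t,x,\omega)-sy\cdot\omega-t)}\widehat{\chi}(t)\,q_0(t,x,\omega)\,s^{d-1}\,f(y)\,dt\,ds\,d\omega\,dy + \text{lower-order terms}.
\]
Applying stationary phase in the $(d+1)$-dimensional variable $(t,s,\omega)$ with large parameter $\lambda$ produces a factor $\lambda^{-(d+1)/2}$, which combined with the $\lambda^{d}$ from the polar rescaling yields the claimed pre-factor $\lambda^{(d-1)/2}$. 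The unique stationary point has $s=1$, $\omega$ equal to the covector dual to the unit tangent of the geodesic from $y$ to $x$, and $t=d_g(x,y)$; at this point the phase evaluates to $-\lambda\,d_g(x,y)$, which matches $\psi(x,y)=-d_g(x,y)$. The leading amplitude, read off from the Hessian determinant and $q_0$, is smooth and satisfies the claimed uniform derivative bounds, and therefore defines $a_\lambda$.

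The support constraint on $a_\lambda$ is then forced by the stationary-phase identity $t=d_g(x,y)$ together with $t\in{\rm supp\,}\widehat{\chi}$; away from the corresponding annulus the phase has no stationary point. The remainder $R_\lambda$ collects three sources of error: (i) the tails of the stationary-phase asymptotic expansion, each $O(\lambda^{-N})$ for any $N$; (ii) the $(x,y)$-regime where the phase is non-stationary, handled by repeated integration by parts in $t,s,\omega$ to gain arbitrary powers of $\lambda^{-1}$; and (iii) the FIO residual $E_t$, which is smoothing of infinite order. Each of the three contributions produces an operator whose kernel is smooth with derivatives of size $O(\lambda^{-\infty})$, so Sobolev embedding upgrades the trivial $L^2\to L^2$ bound to the $L^2\to L^p$ bound asserted in the lemma.

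The main obstacle is largely bookkeeping: one must verify that the stationary-phase expansion produces the stated leading amplitude exactly on the annulus dictated by ${\rm supp\,}\widehat{\chi}$, and that all other contributions are genuinely smoothing in the $L^2\to L^p$ sense rather than merely $L^2\to L^2$. This is however entirely standard once the phase is partitioned into stationary and non-stationary regimes, and the lemma is a minor variant of the classical parametrix in Chapter~5 of \cite{sogge2017} and of the formulations used in \cite{bgt2007,sogge2011}.
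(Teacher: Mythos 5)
Your proposal is correct and follows exactly the route the paper itself indicates: the paper gives no detailed proof but cites the standard Hadamard--Hörmander parametrix for $e^{it\sqrt{-\Delta_g}}$ plus stationary phase (Lemma 5.1.3 of \cite{sogge2017}, as in \cite{bgt2007,sogge2011}), which is precisely the construction you carry out, with the correct power count $\lambda^{d}\cdot\lambda^{-(d+1)/2}=\lambda^{(d-1)/2}$, the stationary point $t=d_g(x,y)$ giving the phase $-\lambda d_g(x,y)$ and the support constraint from ${\rm supp\,}\widehat\chi$, and the non-stationary/low-frequency/FIO-residual contributions absorbed into $R_\lambda$ with $O(\lambda^{-N})$ kernel bounds yielding the $L^2\to L^p$ estimate.
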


\begin{remark}
  By a partition of unity we may also assume $a_\lambda(x,y)$ is supported in small coordinate‐chart neighborhoods of fixed points $x_0,y_0\in M$ with 
  $d_g(x,y)\in[\epsilon_0/2,\epsilon_0]$, both lying inside the geodesic ball $B(x_0,10\epsilon_0)$.  We can always take $\epsilon_0>0$ smaller if needed.
\end{remark}
Define
   \[
    T_{\lambda}f(x)
    =
     \int_{M}e^{\,i\lambda\psi(x,y)}\,a_{\lambda}(x,y)\,f(y)\,dy.
  \]
The approximate projection operator $\chi_\lambda$ is usually called a reproducing operator since it reproduces eigenfunctions by $\chi_\lambda e_\lambda=e_\lambda$.  So it suffices to consider
the operator norm estimates of $T_{\lambda}$ to get the eigenfunction estimates, as $R_{\lambda}$ satisfies much
better bounds than we want to prove.

\section{Proof in dimension two}\label{sec 4}
We are now ready to prove Theorem \ref{main theorem} when $k=d-1$ and $d=2$.
\begin{proof}[Proof of Theorem \ref{main theorem} when $k=d-1$ when $d=2$]
We begin by establishing the endpoint estimate \eqref{main estimate}.
Recall that the operator $\chi_{\lambda}$ is defined by
$$\chi_{\lambda}f:=\chi(\sqrt{-\Delta_g}-\lambda)f=\frac{1}{2\pi}\int_{|t|\in[\epsilon_0,2\epsilon_0]}(e^{it\sqrt{-\Delta_g}}f)e^{-it\lambda}\hat{\chi}(t)\,dt.$$
By replacing $t_j$ with $|t_j|$, we may assume that $t_j\geq0$. Since
$\chi_\lambda f_j=\chi(\lambda_j-\lambda)f_j$ and
$\chi(\lambda_j-\lambda)>1/2$, it follows that
\[
\sum_{j\in J}t_j|f_j|^2
\leq
4\sum_{j\in J}t_j|\chi_\lambda f_j|^2.
\]
 Therefore, establishing \eqref{main estimate} in dimension two reduces to proving the following estimate involving $\chi_{\lambda} f_j$:
	\begin{equation}\label{g20}
	\Biggl\|\sum_{j\in J}t_j|\chi_{\lambda} f_j|^2\Biggr\|_{L^{p/2}(\Sigma)}\le C\lambda^{2\delta{(p)}}(\log\lambda)^{h(p)}\Big\|\{t_j\}\Big\|_{l^{\alpha(p)}(J)}.
	\end{equation}
	By Lemma \ref{l1}, this inequality is equivalent to the Schatten norm estimate
	\begin{equation}\label{g00}
\Big\|W{\chi}_{\lambda}\chi^{*}_{\lambda}\overline{W}\Big\|_{\mathfrak{S}^{(\alpha(p))'}(L^2(\Sigma))}\le C\lambda^{2\delta(p)}(\log\lambda)^{h(p)}\|W\|^2_{L^{2p/(p-2)}(\Sigma)},   {\text{for all} }W\in L^{2p/(p-2)}(\Sigma),
	\end{equation}
	where $C$ is independent of $W$ and $\lambda$.

	Note that, by Lemma \ref{l0}, $R_{\lambda}$ is
an operator that always contributes a term rapidly decaying in $\lambda$, it suffices to prove the bound
	\begin{equation}\label{g24}
	\Big\|WT_{\lambda}T^{*}_{\lambda}\overline{W}\Big\|_{\mathfrak{S}^{(\alpha(p))'}(L^2(\Sigma))}\le C\lambda^{2\delta(p)-1}(\log\lambda)^{h(p)}\|W\|^2_{L^{2p/(p-2)}(\Sigma)}.
	\end{equation}
	
	First, consider the case $p=\infty$, which corresponds to $(\alpha(p))'=1$.  Then we have
	\begin{equation}\label{g23}
	\begin{aligned}
\Big\|WT_{\lambda}T^{*}_{\lambda}\overline{W}\Big\|_{\mathfrak{S}^{1}(L^2(\Sigma))}
	&= \|WT_{\lambda} \|^2_{\mathfrak{S}^{2}(L^2(M)\to L^2(\Sigma))}\\
 &=\int_{\Sigma}\int_M|W(x)|^2|T_{\lambda}(x,y)|^2\,dy\,dx\\
 &\leq C\|W\|^2_{L^2(\Sigma)},
	\end{aligned}
	\end{equation}
    where $T_{\lambda}(x,y)=e^{-i\lambda d_g(x,y)}a_{\lambda}(x,y)$ is the kernel function associated with $T_{\lambda}$.
    Here we have used the fact that  if 
  \[
    S : L^2(M)\longrightarrow L^2(\Sigma)
  \]
  is a Hilbert–Schmidt operator with integral kernel $S(x,y)$, then
  \[
    \big\|S\big\|_{\mathfrak S^2(L^2(M)\to L^2(\Sigma))}^2
    =
    \int_{\Sigma}\!\!\int_M \bigl|S(x,y)\bigr|^2\,dy\,dx.
  \]

Next we prove \eqref{g24} for $p=4$.  Once this case is established, the
range $4\le p\le\infty$ follows immediately by interpolation.
Because $\Sigma$ is compact, Hölder’s inequality implies that for every
$2\le p<4$,
\[
  \Biggl\|\sum_{j\in J} t_j\,|\chi_{\lambda}f_j|^{2}\Biggr\|_{L^{p/2}(\Sigma)}
  \;\lesssim\;
  \Biggl\|\sum_{j\in J} t_j\,|\chi_{\lambda}f_j|^{2}\Biggr\|_{L^{2}(\Sigma)}.
\]
Hence \eqref{main estimate} is a consequence of \eqref{g24} with $p=4$.

Assume that we are in the geodesic normal coordinate system about $x_0\in M$, and $\Sigma:[0,1]\to M$
parameterized by the arc length $s$ and passes through $x_0$. Partition of unity allows us to assume
that $\Sigma$ is contained in the geodesic ball centered at $x_0$ with radius $\epsilon_0/10$, i.e., $|x(s)|\leqslant \epsilon_0/10$ and $x(0) = 0$, which forces $\epsilon_0/3\leq |y| \leq 4\epsilon_0/3$.
We write
\[\mathcal{T}_{\lambda}(f)(s) :={T}_{\lambda}(f)(x(s))=\int e^{i\lambda\psi(x(s),y)}a(x(s), y)f (y)\,dy,\]
and denote by $\mathcal{K}(s, t)$ the kernel of operator $\mathcal{T}_{\lambda}\mathcal{T}_{\lambda}^{*}$. Then we have
$$\mathcal{K}(s, t)=\int e^{i\lambda[\psi(x(s),y)-\psi(x(t),y)]}a(x(s),y)\overline{a(x(t),y)}\,dy.$$
Our goal is reduced to proving
\begin{equation}\label{gg24}	\Big\|W\mathcal{T}_{\lambda}\mathcal{T}^{*}_{\lambda}\overline{W}\Big\|_{\mathfrak{S}^{2}(L^2(\Sigma))}\lesssim\lambda^{-1/2}(\log\lambda)^{1/2}\|W\|^2_{L^{4}(\Sigma)},   {\text{for all}}  W\in L^{4}(\Sigma),
	\end{equation}
   which is equivalent to 
    \begin{equation}\label{gg25}
    \Big(\int_0^1\int_0^1 |W(x(s))\mathcal{K}(s,t)W(x(t))|^2\,ds\,dt\Big)^{1/2}\lesssim\lambda^{-1/2}(\log\lambda)^{1/2}\|W\|^2_{L^{4}(\Sigma)}.
    \end{equation}

Since $y$ stays in the annulus with inner radius ${\epsilon_0}/3$ and outer radius ${4\epsilon_0}/3$, we can restrict $y$
to the circle with radius $r$. In fact, we can represent $y$ in polar coordinates as $y = r\omega$
(i.e., geodesic polar coordinates on $M$), $\epsilon_0/3\leqslant r\leqslant 4\epsilon_0/3$, $\omega = (\omega_1, \omega_2)\in \mathbb S^1$ and denote:
\[\psi_r(x(s), \omega) = \psi(x(s), y),  a_r(x(s), \omega)\overline{a_r(x(t),\omega)} = \kappa(r,\omega)a_r(x(s), \omega)\overline{a_r(x(t),\omega)}\] for some smooth function $\kappa$.
Define 
$$\mathcal{K}_r(s, t)=\int_{\mathbb{S}^{1}} e^{i\lambda[\psi_r(x(s),\omega)-\psi_r(x(t),\omega)]}a_r(x(s),\omega)\overline{a_r(x(t),\omega)}\,d\omega.$$
Then, \[\mathcal{K}(s,t)=\int_{\epsilon_0/3}^{4\epsilon_0/3} \mathcal{K}_r(s,t)\,dr.\]

 By applying \cite[Lemma 3.2]{bgt2007}, we have that $\mathcal K_r(s,t)$ is bounded by
$$C(1+\lambda|s-t|)^{-1/2},$$ 
and then \[\mathcal{K}(s,t)\lesssim (1+\lambda|s-t|)^{-1/2}.\]
Consequently, the square of the left-hand side of  \eqref{gg25} is bounded by
\begin{equation}\label{gg26}
    \int_0^1\int_0^1 |W(x(s))|^2(1+\lambda|s-t|)^{-1}|W(x(t))|^2\,dt\,ds,
\end{equation}

Applying the Young inequality, we obtain
\begin{equation}\label{young}
\begin{aligned}
\eqref{gg26}&\leq\|W\|^2_{L^4(\Sigma)}\cdot\Big\|\int^1_0(1+\lambda|s-t|)^{-1}|W(x(t))|^2dt\Big\|_{L^2(0,1)}\\
&\leq\|W\|_{L^4(\Sigma)}^4\cdot\int^1_0(1+\lambda t)^{-1}dt\\
&\lesssim\lambda^{-1}\log\lambda\|W\|_{L^4(\Sigma)}^4.
\end{aligned} 
\end{equation}
Taking square roots completes the proof of \eqref{gg24}. 

We now prove the logarithm-free refinement in Theorem \ref{main theorem} when $k=d-1$, beginning with the case $ 2 \leq p < 4 $.

Let $1<\alpha<2$, and thus $\alpha'>2$. the operator $W\,T_\lambda T_\lambda^{*}\,\overline{W}$
is self-adjoint.  Invoking the Hausdorff–Young theorem for integral
operators \cite[Theorem 1]{russo} and then Young’s inequality we obtain
\begin{align*}
    \Big\|W T_\lambda T_\lambda^* \overline{W}\Big\|_{\mathfrak{S}^{\alpha'}}&\leq\Big(\int^1_0\Big(\int^1_0\big|W(x(s))\mathcal{K}(s,t)\overline{W(x(t))}\big|^{\alpha}\,ds\Big)^{\alpha'/\alpha}\,dt\Big)^{1/\alpha'}\\
    &\leq \|W\|_{L^{2\alpha'}(\Sigma)}\Big(\int^1_0\Big(\int^1_0|W(x(s))|^{\alpha}(1+\lambda|s-t|)^{-\alpha/2}\,ds\Big)^{2\alpha'/\alpha}\,dt\Big)^{1/2\alpha'}\\
    &\le\|W\|^2_{L^{2\alpha'}(\Sigma)}\Big(\int^1_0(1+\lambda|s|)^{-\frac{\alpha}{2}}\,ds\Big)^{1/\alpha}\\
    &\lesssim\lambda^{-1/2}\|W\|^2_{L^{2\alpha'}(\Sigma)}.
\end{align*}

Set $ 2 p / (p - 2) = 2 \alpha' > 4 $, equivalently $ 2 \leq p < 4 $. By the duality principle (Lemma \ref{l1}), the above estimate gives us
\begin{equation}
\label{near2ineq}
\Big\| \sum_j t_j | T_\lambda f_j |^2 \Big\|_{L^{p/2}(\Sigma)} \leq C_p \lambda^{-1/2} \big\| \{t_j\} \big\|_{l^{\alpha}(J)}.
\end{equation}

Since the curve $\Sigma$ is compact, Hölder's inequality implies that for any $2 \le q \le p$, we have
\[
\Big\| \sum_j t_j | T_\lambda f_j |^2 \Big\|_{L^{q/2}(\Sigma)} \leq C_\alpha \lambda^{-1/2} \big\| \{t_j\} \big\|_{l^\alpha(J)}.
\]
Note that we can choose $p$ in \eqref{near2ineq} arbitrarily close to $4$, thus the associated exponent $\alpha$ can be taken arbitrarily close to $2$. This completes the proof for the range $2\le p<4$.

Now let $4<p\leq\infty$.
Note that for any $p_0\in(4,p)$, we have
\begin{align*}
\Big\|WT_{\lambda}T^{*}_{\lambda}\overline{W}\Big\|_{\mathfrak{S}^2(L^2(\Sigma))}&=\Big(\int^1_0\int^1_0|W(x(s))|^2|\mathcal{K}(s,t)|^2|\overline{W(x(t))}|^2\,ds\,dt\Big)^{1/2}\\
&\leq\|W\|_{L^{2p_0/(p_0-2)}(\Sigma)}\Big\|\int^1_0 |\mathcal{K}(s,t)|^2|W(x(t))|^2\,dt\Big\|^{1/2}_{L^{p_0/2}(0,1)}\\
&\leq\|(1+\lambda|s|)^{-1}\|^{1/2}_{L^{p_0/4}(0,1)}\|W\|^2_{L^{2p_0/(p_0-2)}(\Sigma)}\\
&\leq C_{p_0}\lambda^{-2/p_0}\|W\|_{L^{2p_0/(p_0-2)}(\Sigma)}^2.
\end{align*}
Interpolating this with the trace-class bound 
\eqref{g23} yields
\[
\Big\|WT_{\lambda}T^{*}_{\lambda}\overline{W}\Big\|_{\mathfrak{S}^{(2p/p_0)'}(L^2(\Sigma))}\leq C_{p_0}\lambda^{-2/p}\|W\|^2_{L^{2p/(p-2)}(\Sigma)}.
\]
By duality this is equivalent to
\[\Biggl\|\sum_{j\in J}t_j|T_{\lambda}f_j|^2\Biggr\|_{L^{p/2}(\Sigma)}\leq C_{p_0}\lambda^{-2/p}\big\|\{t_j\}\big\|_{l^{2p/p_0}(J)}. \]
Since $p_0$ can be taken to be arbitrarily close to $4$, we see that the following holds for any $1\le\alpha<p/2.$
\[\Biggl\|\sum_{j\in J}t_j|T_{\lambda}f_j|^2\Biggr\|_{L^{p/2}(\Sigma)}\leq C_\alpha\lambda^{-2/p}\big\|\{t_j\}\big\|_{l^{\alpha}(J)}, \]
as desired. This completes the proof of Theorem \ref{main theorem} when $k=d-1$ in dimension two.
\end{proof}

\section{Proof in higher dimensions}\label{sec 5}
In this section, we prove Theorem \ref{main theorem} for $d\ge3$.
Assume that in the geodesic normal coordinate system about $x_0\in M,$ $\Sigma$  is parameterized
by $x(z_1 ,z_2 ,\cdots,z_k),$ $ x(0) = 0$.
Again by a partition of unity, we can assume that (a local piece of) $\Sigma$ is contained
in a small enough geodesic ball about $x_0$, i.e., $|x(z)|\leq \epsilon_0/10,$ which forces $\epsilon_0/3\leq |y| \leq 4\epsilon_0/3$. 
 We write
 $$\mathcal{T}_{\lambda}(f)(z):=T_{\lambda}(f)(x(z))=\int_{M}e^{i\lambda\psi(x(z),y)}a_{\lambda}(x(z),y)f(y)\,dy$$
 and denote by ${K}(x,x')$ the kernel of the operator ${T}_{\lambda}{T}^*_{\lambda}$. Let
 $$\mathcal{K}(z,z'):=K(x(z),x(z'))=\int_Me^{i\lambda[\psi(x(z),y)-\psi(x(z'),y)]}a_{\lambda}(x(z),y)\overline{a_{\lambda}(x(z'),y)}\,dy$$ 
 be the kernel of the operator $\mathcal{T}_{\lambda}\mathcal{T}^*_{\lambda}$. We now turn to an estimate from \cite{bgt2007} that will be essential for our analysis.
 \begin{lemma}[Lemma 6.1 in \cite{bgt2007}]\label{l6} For any $(x,x')$,
 	$|{K}(x,x')|\lesssim 1$. 
If $|\psi(x,x')|\gtrsim\lambda^{-1}$, then for  any $N\in\mathbb{Z}^+$, there exists $\epsilon_0\ll1$ so that 
 		\begin{equation}\label{g17}
 		{K}(x,x')=\sum_{\pm}\sum^{N-1}_{n=0}\frac{e^{\pm i\lambda \psi(x,x')}}{(\lambda \psi(x,x'))^{\frac{d-1}{2}+n}}a_n^{\pm}(x,x',\lambda)+b_N(x,x',\lambda),
 		\end{equation}
 	where $a^{\pm}_n,b_n\in C^{\infty}(\mathbb{R}^d\times\mathbb{R}^d\times\mathbb{R})$ and $-\psi(x,y)=d_g(x,y)$ is the geodesic distance
     between $x$ and $x'$.  Furthermore,  $a^{\pm}$ are real, have supports of size $O(\epsilon_0)$ with respect to the first two variables and are uniformly
 	bounded with respect to $\lambda$. Finally
 	\begin{equation}\label{gb}   
 	|b_N(x,x',\lambda)|\lesssim |1+\lambda \psi(x-x')|^{-\frac{d-1}{2}-N}.
  \end{equation}
 \end{lemma}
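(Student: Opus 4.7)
The uniform pointwise bound $|K(x,x')|\lesssim 1$ is immediate from the fact that $a_\lambda$ is uniformly bounded in $\lambda$ with compact support in $y$. All the work goes into the expansion \eqref{g17}, which is a standard but somewhat delicate stationary phase calculation in the angular variable.

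The plan is to introduce geodesic polar coordinates $y=\exp_{x}(r\omega)$ centred at $x$, with $r\in[\epsilon_0/3,4\epsilon_0/3]$ and $\omega\in\mathbb{S}^{d-1}$. Writing $dy=J(r,\omega)\,dr\,d\omega$ for the smooth polar volume factor, the kernel becomes
\[
\mathcal{K}(x,x')=\int_{\epsilon_0/3}^{4\epsilon_0/3}\!\!\int_{\mathbb{S}^{d-1}}e^{\,i\lambda\Phi(r,\omega;x,x')}\,\tilde a(r,\omega;x,x',\lambda)\,d\omega\,dr,
\]
where $\Phi(r,\omega)=\psi(x,\exp_x(r\omega))-\psi(x',\exp_x(r\omega))=-r+d_g(x',\exp_x(r\omega))$ and $\tilde a$ is smooth and compactly supported. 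For each fixed $r$ we apply stationary phase in $\omega$. The critical points of $\omega\mapsto d_g(x',\exp_x(r\omega))$ are precisely the two unit vectors $\omega_\pm$ lying along the geodesic joining $x$ and $x'$: $\omega_+$ points from $x$ towards $x'$ and $\omega_-$ in the opposite direction. By the first variation of arclength, and provided $\epsilon_0$ is below the injectivity radius, the critical values are $\Phi(r,\omega_\pm)=\mp d_g(x,x')=\pm\psi(x,x')$, which is what produces the two oscillatory factors $e^{\pm i\lambda\psi(x,x')}$.

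The next step is to verify that the $\omega$-Hessian of $\Phi$ at $\omega_\pm$ is non-degenerate with determinant comparable to $d_g(x,x')^{d-1}$. A direct computation in normal coordinates gives an eigenvalue of size $r\cdot d_g(x,x')/(r\mp d_g(x,x'))$ along each of the $d-1$ directions orthogonal to $\omega_\pm$ in $T_{\omega_\pm}\mathbb{S}^{d-1}$; this is exactly the focusing behaviour of the squared distance and is bounded away from zero uniformly for $d_g(x,x')\gtrsim\lambda^{-1}$. Applying Hörmander's stationary phase asymptotic (Hörmander vol.~I, Thm.~7.7.5) to $N$ terms yields precisely the sum \eqref{g17}: the principal factor $(\lambda\psi(x,x'))^{-(d-1)/2}$ comes from $|\det\nabla_\omega^2\Phi|^{-1/2}/\lambda^{(d-1)/2}$, each subsequent application of the differential operator in the asymptotic expansion contributes one additional power of $(\lambda\psi(x,x'))^{-1}$, and the smooth $r$-integration (which has no stationary points) is absorbed into the amplitudes $a_n^\pm(x,x',\lambda)$, preserving their uniform bounds. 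The remainder bound \eqref{gb} is the standard $N$-th stationary-phase remainder in the regime $\lambda|\psi|\gtrsim 1$, interpolated with the trivial bound $|K|\lesssim 1$ to give the $(1+\lambda|\psi|)^{-(d-1)/2-N}$ form.

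The main obstacle is bookkeeping of the Hessian: one must check that the non-degeneracy constant and the resulting $\psi(x,x')^{-(d-1)/2-n}$ powers come out correctly even when the direction from $x$ to $x'$ is not aligned with a coordinate axis, which is naturally handled by working intrinsically and using the fact that the squared geodesic distance is smooth in a neighbourhood of the diagonal. The hypothesis $|\psi(x,x')|\gtrsim\lambda^{-1}$ is indispensable here: when $x=x'$ the phase $\Phi$ vanishes identically and the two critical points $\omega_\pm$ coalesce into the whole sphere, so stationary phase degenerates; the cutoff $\lambda|\psi|\gtrsim 1$ keeps the two critical points quantitatively separated at scale $\ge\lambda^{-1/2}$, which is exactly what the asymptotic expansion needs.
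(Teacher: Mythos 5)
The paper itself does not prove this statement: it is imported verbatim as Lemma 6.1 of Burq--G\'erard--Tzvetkov \cite{bgt2007} (see also \cite{hu2009}), so the comparison is with the standard proof there, and your outline is essentially that proof. Passing to geodesic polar coordinates $y=\exp_x(r\omega)$, locating the two critical directions $\omega_\pm$ along the geodesic through $x$ and $x'$ with critical values $\pm\psi(x,x')$, computing that the angular Hessian has eigenvalues of size $r\,d_g(x,x')/(r\mp d_g(x,x'))\sim d_g(x,x')$ (so determinant $\sim d_g(x,x')^{d-1}$), and noting that the critical value is independent of $r$ so the $r$-integration merely produces new amplitudes --- all of this is correct and is how \eqref{g17} is obtained; the trivial bound $|K|\lesssim 1$ is immediate as you say.

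Two points in your write-up need repair, one cosmetic and one substantive. Cosmetically, your explanation of the hypothesis $|\psi(x,x')|\gtrsim\lambda^{-1}$ is wrong: the critical points $\omega_\pm$ are antipodal on $\mathbb{S}^{d-1}$ and never approach each other, so there is no ``separation at scale $\lambda^{-1/2}$''; what degenerates as $x'\to x$ is the $\omega$-dependence of the phase itself, i.e.\ the Hessian of size $\sim d_g(x,x')$, and the hypothesis enters only by making the effective parameter $\lambda d_g(x,x')\gtrsim1$. Substantively, you cannot invoke H\"ormander's Theorem 7.7.5 with large parameter $\lambda$ at face value when the Hessian is this small: its lower-order and remainder terms carry inverse powers of the Hessian determinant, and without further input they do not yield the powers $(\lambda\psi(x,x'))^{-\frac{d-1}{2}-n}$ in \eqref{g17} nor the bound \eqref{gb} uniformly down to $d_g(x,x')\sim\lambda^{-1}$. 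The missing observation is that every $\omega$-derivative of $d_g(x',\exp_x(r\omega))$ vanishes at $x'=x$ and is therefore $O(d_g(x,x'))$, so the phase factors as $\pm\psi(x,x')+d_g(x,x')\,G(r,\omega;x,x')$ with $G$ smooth, uniformly bounded, and with uniformly nondegenerate $\omega$-Hessian at $\omega_\pm$; one then applies stationary phase in the single large parameter $\mu=\lambda d_g(x,x')\gtrsim1$ (and nonstationary-phase estimates away from $\omega_\pm$, where $|\nabla_\omega G|\gtrsim1$), which produces exactly the expansion and remainder claimed. With this rescaling step made explicit, your argument matches the cited proof.
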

In view of $$d_g(x(z),x(z'))\sim|z-z'|,$$
 noting that $K$ is bounded, applying Lemma \ref{l6} yields a rough bound on the kernel of $\mathcal{T}_{\lambda}\mathcal{T}^*_{\lambda}$
 \begin{equation}\label{g9}
 |\mathcal{K}(z,z')|\leq C(1+\lambda|z-z'|)^{-\frac{d-1}{2}}.
 \end{equation}
However, this estimate alone is insufficient to achieve our desired bounds. To improve upon it, we follow the approach in \cite{bgt2007} by exploiting the oscillatory nature of the phase in $K(x(z),x(z'))$. This involves dyadically decomposing the kernel based on the size of $|z-z'|$. We now provide a detailed description of this decomposition and collect the key estimates that will be instrumental in our proof.

 We fix a compactly supported bump function $\chi_0\in C^{\infty}_0(\mathbb{R}^k),$ so that $ \mathrm{supp} \,\chi_0\in
\{x\in\mathbb{R}^k:|x|\leq C\}$.
Additionally, let  $\tilde{\chi}\in C^{\infty}_0(\mathbb{R}^k)$  be supported in the set $\{x\in{\mathbb{R}^k}:\frac{1 }{2}<|x|<2\}$ such that there is a partition of unity on $\{x\in\mathbb{R}^k: |x|<1\}$ of the form
\begin{equation}\label{g10}
1=\chi_0(\lambda x)+\sum^{\log\lambda/\log2}_{j=1}\tilde{\chi}(2^jx).
\end{equation}
Using this, we decompose the kernel  $\mathcal{K}(z,z')$ dyadically:
\begin{equation}\label{dyadic}
\begin{aligned}
\mathcal{K}(z,z')&=\mathcal{K}(z,z')\chi_0(\lambda(z-z'))+\sum^{\log\lambda/\log2}_{j=1}\mathcal{K}(z,z')\tilde{\chi}(2^j(z-z'))\\
&= \mathcal{K}_0(z,z')+\sum^{\log\lambda/\log2}_{j=1}\mathcal{K}_j(z,z').
\end{aligned}
\end{equation}
In the proof of Theorem \ref{main theorem} when $k\le d-2$, we will estimate the contribution of each dyadic piece $\mathcal K_j$. To facilitate these estimates, we rely on the following crucial estimates from \cite{bgt2007}.
\begin{lemma}[Proposition 6.3 in \cite{bgt2007}]\label{l7}
For sufficiently large $j$, let $(\mathcal{T}_{\lambda}\mathcal{T}_{\lambda}^*)_j$ be the operator whose integral kernel is $\mathcal{K}_j(z,z')$. Then it satisfies the estimate
	\begin{align}
	&\|(\mathcal{T}_{\lambda}\mathcal{T}^*_{\lambda})_j f\|_{L^{\infty}(\Sigma)}\lesssim\Big(\frac{2^j}{\lambda}\Big)^{\frac{d-1}{2}}\|f\|_{L^1(\Sigma)},\label{g26}\\
	&\|(\mathcal{T}_{\lambda}\mathcal{T}^*_{\lambda})_j f\|_{L^2(\Sigma)}\lesssim2^{-j k}\Big(\frac{2^j}{\lambda}\Big)^{\frac{d-1}{2}+\frac{k-1}{2}}\|f\|_{L^2(\Sigma)}.\label{g27}
	\end{align}
\end{lemma}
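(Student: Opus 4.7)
}
The plan is to split into two parts: the $L^1\!\to\!L^\infty$ estimate \eqref{g26} comes for free from a pointwise kernel bound, while \eqref{g27} needs the oscillatory structure. Throughout, the range of $j$ is $1\le j\le \log\lambda/\log 2$, so on the support of $\tilde\chi(2^j(z-z'))$ we have $|z-z'|\sim 2^{-j}$ and $d_g(x(z),x(z'))\sim 2^{-j}\gtrsim\lambda^{-1}$, which places us squarely in the regime where the expansion \eqref{g17} of Lemma~\ref{l6} is valid.

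For \eqref{g26} I read off a uniform pointwise bound on $\mathcal K_j$. Each term in \eqref{g17} is controlled by $(\lambda|\psi|)^{-(d-1)/2}\lesssim (2^j/\lambda)^{(d-1)/2}$, and the error $b_N$ satisfies the stronger bound \eqref{gb}. Therefore $\|\mathcal K_j\|_{L^\infty}\lesssim (2^j/\lambda)^{(d-1)/2}$, and \eqref{g26} follows by estimating $(\mathcal T_\lambda \mathcal T_\lambda^*)_j f$ by the sup of the kernel times $\|f\|_{L^1(\Sigma)}$.

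For \eqref{g27} a crude application of Schur's test to the same pointwise bound gives
$\|(\mathcal T_\lambda \mathcal T_\lambda^*)_j\|_{L^2\to L^2}\lesssim 2^{-jk}(2^j/\lambda)^{(d-1)/2},$
which falls short by exactly the factor $(2^j/\lambda)^{(k-1)/2}$. To recover this factor I would invoke $TT^*$ and analyze the kernel
\[
L_j(z,z'')=\int_\Sigma \mathcal K_j(z,z')\,\overline{\mathcal K_j(z'',z')}\,dz'.
\]
Inserting \eqref{g17} into both factors yields an amplitude of size $(\lambda 2^{-j})^{-(d-1)}$ and a phase $\lambda\bigl[\psi(x(z),x(z'))-\psi(x(z''),x(z'))\bigr]$. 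The $z'$-gradient of this phase is $\lambda$ times the difference of two unit covectors pointing from $x(z')$ toward $x(z)$ and $x(z'')$; by the eikonal equation and the geometry of $\Sigma$, it vanishes on a curve (the ``midpoint'' locus joining $z$ to $z''$) and has size comparable to $\lambda|z-z''|/2^{-j}$ elsewhere. A standard stationary phase argument in the $(k-1)$ transverse directions then produces a factor of $(\lambda 2^{-j})^{-(k-1)/2}$ in $L_j$, while non-stationary phase kills the off-diagonal region. Applying Schur to the gained kernel $L_j$ and taking square roots yields exactly the claimed bound \eqref{g27}.

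The main technical obstacle is verifying the non-degeneracy required for the transverse stationary phase: one must check that the Hessian of the phase of $L_j$ in the $(k-1)$ directions perpendicular to $z''-z$ is of full rank and uniformly bounded below (after suitable rescaling by $2^{-j}$). This is a perturbation of the flat Euclidean model with $\psi(x,y)=-|x-y|$, where the computation is explicit, and reduces in general to the statement that the second fundamental form of $\Sigma$ does not align the transverse directions with the radial one, a point secured by restricting to a small coordinate chart. The subprincipal terms in \eqref{g17} and the remainder $b_N$ satisfy better decay than the leading term and contribute identical exponents under the same stationary phase analysis, so they do not affect the final bounds.
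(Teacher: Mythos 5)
First, a point of reference: the paper does not prove this lemma at all --- it is imported verbatim as Proposition~6.3 of \cite{bgt2007}, with only the surrounding remark that the error term $b_N$ of \eqref{g17} can be absorbed. So you are reconstructing the proof of a cited result. Your treatment of \eqref{g26} is correct and standard: on the support of $\tilde\chi(2^j(z-z'))$ one has $d_g(x(z),x(z'))\sim 2^{-j}\gtrsim\lambda^{-1}$, so Lemma~\ref{l6} gives $|\mathcal K_j(z,z')|\lesssim(2^j/\lambda)^{(d-1)/2}$ and the $L^1\to L^\infty$ bound follows at once.

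Your argument for \eqref{g27}, however, has a genuine quantitative gap at its final step. Write $\mu=\lambda 2^{-j}\ge 1$. The stationary set of the $z'$-integral defining $L_j(z,z'')$ is (an arc of) the line through $z$ and $z''$, i.e.\ a one-dimensional critical manifold, so stationary phase in the $k-1$ transverse directions gives at best
\[
|L_j(z,z'')|\;\lesssim\;2^{-jk}\,\mu^{-(d-1)}\bigl(1+\lambda|z-z''|\bigr)^{-\frac{k-1}{2}},
\]
and \emph{no} rapid decay in $|z-z''|$: the corank-one degeneracy of the phase $-d_g$ is exactly what prevents it. Feeding this into Schur's test gives $\sup_z\int|L_j(z,z'')|\,dz''\lesssim 2^{-2jk}\mu^{-(d-1)}\mu^{-(k-1)/2}$, hence
\[
\|(\mathcal T_\lambda\mathcal T_\lambda^*)_j\|_{L^2\to L^2}\;\lesssim\;2^{-jk}\Bigl(\tfrac{2^j}{\lambda}\Bigr)^{\frac{d-1}{2}+\frac{k-1}{4}},
\]
which falls short of \eqref{g27} by the factor $(\lambda/2^j)^{(k-1)/4}$ for every $k\ge 2$ (for $k=1$ there is nothing to gain and your argument is fine). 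This is not a matter of sharpening your pointwise bound on $L_j$: for a phase whose mixed Hessian has corank one, a single $TT^*$ followed by Schur is intrinsically lossy, because Schur cannot see the orthogonality along the degenerate direction. The standard repair --- and what underlies the cited Proposition 6.3 --- is the degenerate-rank form of H\"ormander's $L^2$ oscillatory-integral theorem applied directly to $\mathcal T_\lambda\mathcal T_\lambda^*$ after rescaling $z=2^{-j}u$: foliate the $u'$-domain by the level sets of the radial (degenerate) variable, apply the non-degenerate $(k-1)$-dimensional theorem on each slice to obtain $\mu^{-(k-1)/2}$, and recombine the slices with Minkowski's inequality (equivalently, decompose into $\mu^{-1/2}$-angular sectors and use Cotlar--Stein). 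This produces the full factor $(2^j/\lambda)^{(k-1)/2}$ at the level of the operator itself, which together with the amplitude size $\mu^{-(d-1)/2}$ and the Jacobian $2^{-jk}$ yields \eqref{g27}. (A minor further point: the non-degeneracy you need is not about the second fundamental form of $\Sigma$ but about the rank of $z'\mapsto \nabla_{T\Sigma} d_g(x(z),x(z'))$, which equals $k-1$ because the eikonal equation forces the image into a sphere and exactly one direction is lost.)
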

We remark that the operator corresponding to $\mathcal K_0$ satisfies \eqref{g26} and \eqref{g27} with $2^j\sim\lambda,$ and thus we do not need to handle it separately. 
    Furthermore, note that the operator $(\mathcal{T}_\lambda\mathcal{T}^*_\lambda)_j$ in Proposition 6.3 of \cite{bgt2007} does not include the remainder term $b_N$. However, one readily verifies that the same estimates hold when $b_N$ is included, provided $N$ is chosen sufficiently large. Indeed, by \eqref{gb}, if $N$ in Lemma \ref{l6} is sufficiently large, then the operator with kernel
\[
b_N(z,z',\lambda)\,\tilde\chi\bigl(2^j(z-z')\bigr)
\]
satisfies \eqref{g26} and \eqref{g27} via Young’s inequality.
\subsection{Proof of Theorem \ref{main theorem} when $k\le d-2$}

We begin by adopting the same strategy as in the two-dimensional case.
By the reasoning leading to \eqref{g24}, it suffices to prove that, for every
$W\in L^{2p/(p-2)}(\Sigma)$,
\begin{equation}\label{g37}
\big\|W\mathcal{T}_\lambda\mathcal{T}_\lambda^*\overline{W}\big\|_
{\mathfrak{S}^{(p/2)'}(L^2(\Sigma))}
\lesssim
(\log\lambda)^{h(k,d,p)}
\lambda^{2\delta(k,d,p)-(d-1)}
\|W\|_{L^{2p/(p-2)}(\Sigma)}^2.
\end{equation}

We shall apply the dyadic decomposition \eqref{dyadic} to the kernel of $\mathcal{T}_\lambda \mathcal{T}_\lambda^*$. This decomposition enables us to leverage Proposition \ref{l7} to obtain bounds on the Schatten norms of the operators $ W (\mathcal{T}_\lambda \mathcal{T}_\lambda^*)_j \overline{W} $. Summing these bounds over all dyadic levels $ j $ then yields the desired Schatten bound for $ W \mathcal{T}_\lambda \mathcal{T}_\lambda^* \overline{W} $.
	
 By Lemmas \ref{z1} and using the fact that the largest singular value of a compact operator is equal to its operator norm, we have
	\begin{equation}
	\begin{aligned}
\Big\|W(\mathcal{T}_{\lambda}\mathcal{T}^*_{\lambda})_j\overline{W}\Big\|_{\mathfrak{S}^{\infty}(L^2(\Sigma))}
	&\leq \|W\|^2_{L^{\infty}(\Sigma)}\|(\mathcal{T}_{\lambda} \mathcal{T}^*_{\lambda})_j\|_{\mathfrak{S}^{\infty}(L^2(\Sigma))}\\
 &=\|W\|^2_{L^{\infty}(\Sigma)}\|(\mathcal{T}_{\lambda} \mathcal{T}^*_{\lambda})_j\|_{L^2(\Sigma)\to L^2(\Sigma)}.\\
	\end{aligned}
	\end{equation}
Inserting the operator‐norm bound $\eqref{g27}$ into the above inequality gives
\begin{equation}\label{g15}
\Big\|W(\mathcal{T}_{\lambda}\mathcal{T}^*_{\lambda})_j\overline{W}\Big\|_{\mathfrak{S}^{\infty}(L^2(\Sigma))}\leq C_1 2^{-jk}\Big(\frac{2^j}{\lambda}\Big)^{\frac{d-1}{2}+\frac{k-1}{2}}\|W\|^2_{L^{\infty}(\Sigma)},
\end{equation}
where $C_1$ is a constant that
depends only on the fixed data $(M, \Sigma)$.

 Recalling that $\tilde{\chi}$ is supported in the set $\{x\in{\mathbb{R}^k}:\frac{1}{2}<|x|<2\}$ and applying the kernel estimate
\eqref{g9}, or directly using the bound $\eqref{g26}$, one shows
 \begin{equation}\label{q1}
     \begin{aligned}
\Big\|W(\mathcal{T}_{\lambda}\mathcal{T}^*_{\lambda})_j\overline{W}\Big\|_{\mathfrak{S}^{2}(L^2(\Sigma))}&=\Big(\iint_{\Sigma\times\Sigma} \Big|W\mathcal{K}_j(\cdot,\cdot)\overline{W}\Big|^2\Big)^{1/2}\\
         &\leq C_2 \Big(\int_{B^{k}}\int_{B^k }\Big|W(x(z))\Big|^2(1+2^{-j}\lambda)^{-(d-1)}\Big|\overline{W(x(z'))}\Big|^2dzdz'\Big)^{1/2}\\
         &\leq C_2 \Big(\frac{2^j}{\lambda}\Big)^{\frac{d-1}{2}}\|W\|^2_{L^2(\Sigma)},
     \end{aligned}
 \end{equation}
where $B^k$ denotes the unit ball in $\mathbb{R}^k$ and $C_2$ is an absolute constant. Here we used the fact that for every operator $T$ acting on functions on $ L^{2}(\Sigma) $,
$$\big\|T\big\|^2_{\mathfrak{S}^2(L^2(\Sigma))}=\int_{\Sigma}\int_{\Sigma}|T(x,y)|^2\,dx\,dy,$$
where $ T(\cdot, \cdot) $ denotes the integral kernel of  $ T $.

 Interpolating between the bounds \eqref{g15} and \eqref{q1} in the Schatten spaces shows that for any $p_0\geq2$,
\begin{equation}
\Big\|W(\mathcal{T}_{\lambda}\mathcal{T}^*_{\lambda})_j\overline{W}\Big\|_{\mathfrak{S}^{2p_0/(p_0-2)}(L^2(\Sigma))}\leq C_{p_0}
2^{-\frac{2jk}{p_0}}\Big(\frac{2^j}{\lambda}\Big)^{\frac{d-1}{2}+\frac{k-1}{p_0}}\|W\|^2_{L^{2p_0/(p_0-2)}(\Sigma)},
\end{equation}
where we take $\frac{2p_0}{p_0-2}=\infty$ as $p_0=2$ and $\frac{2p_0}{p_0-2}=2$ as $p_0=\infty$.
Summing over $1\ll j\leq\log{\lambda}/\log{2}$ yields\footnote{Here we may require $j$ to be sufficiently large by choosing $\epsilon_0$ small.}
\begin{equation}\label{nf2}
\begin{aligned}
\Big\|W\mathcal{T}_{\lambda}\mathcal{T}^*_{\lambda}\overline{W}\Big\|&_{\mathfrak{S}^{2p_0/(p_0-2)}(L^2(\Sigma))}
\leq C_{p_0}\lambda^{-\frac{d-1}{2}-\frac{k-1}{p_0}}\sum_{j=1}^{\log\lambda/\log2}2^{j(\frac{d-1}{2}-\frac{k+1}{p_0})}\|W\|^2_{L^{2p_0/(p_0-2)}(\Sigma)}\\
&\leq C_{p_0}\begin{cases}
\lambda^{-\frac{d-1}{2}-\frac{k-1}{p_0}+\frac{d-1}{2}-\frac{k+1}{p_0}}\|W\|^2_{L^{2p_0/(p_0-2)}(\Sigma)},\quad
	\text{if}\quad p_0>\frac{2(k+1)}{d-1},\\
	\lambda^{-\frac{d-1}{2}-\frac{k-1}{p_0}}\|W\|^2_{L^{2p_0/(p_0-2)}(\Sigma)},\qquad\qquad\quad\,\text{if}\quad p_0<\frac{2(k+1)}{d-1},\\
	\lambda^{-\frac{d-1}{2}-\frac{k-1}{p_0}}\log\lambda\|W\|^2_{L^{2p_0/(p_0-2)}(\Sigma)},\qquad\quad\text{if}\quad p_0=\frac{2(k+1)}{d-1}.
\end{cases}
\end{aligned}
\end{equation}
Note that estimate \eqref{nf2} remains valid for any $\Sigma$ with $1 \le \dim \Sigma \le d-1$. In particular, we will later apply \eqref{nf2} in the proof of Theorem \ref{main theorem}.

Now we first consider the case where $\dim \Sigma < d - 2$. A direct calculation yields
\begin{equation}\label{n20}
\begin{aligned}
\Big\|W\mathcal{T}_{\lambda}\mathcal{T}^*_{\lambda}\overline{W}\Big\|_{\mathfrak{S}^{1}(L^2(\Sigma))}&=\|W\mathcal{T}_{\lambda} \|^2_{\mathfrak{S}^{2}(L^2(M)\to L^2(\Sigma))}\\
&=\int_{\Sigma}|W(x)|^2\int_M| \mathcal{T}_{\lambda}(x,y)|^2dx'dz\\
&\leq C_3\|W\|^2_{L^{2}(\Sigma)},
\end{aligned}
\end{equation}
where $\mathcal{T}_\lambda(x,y) = e^{-i \lambda d_g(x(z), y)} a_\lambda(x,y)$ is the kernel of $\mathcal{T}_\lambda$ and $C_3$ is a constant that
depends only on the fixed data $(M, \Sigma)$.

For $p_0 > \frac{2(k+1)}{d-1}$, interpolating this estimate with the previous Schatten bounds \eqref{nf2} yields
\begin{equation}\label{nf1}
\Big\| W \mathcal{T}_\lambda \mathcal{T}_\lambda^* \overline{W} \Big\|_{\mathfrak{S}^{\frac{2p}{2p - p_0 - 2}}(L^2(\Sigma))} \leq C_{p_0}\lambda^{-\frac{2k}{p}} \|W\|_{L^{2p/(p-2)}(\Sigma)}^2.
\end{equation}

In particular, choosing $p_0 = 2$, we obtain
\[
\Big\| W \mathcal{T}_\lambda \mathcal{T}_\lambda^* \overline{W} \Big\|_{\mathfrak{S}^{\frac{p}{p-2}}(L^2(\Sigma))} \leq C_1^{\frac2p}C_3^{1-\frac{2}p} \lambda^{-\frac{2k}{p}} \|W\|_{L^{2p/(p-2)}(\Sigma)}^2,
\]
which establishes the desired bound $\eqref{g37}$ for $\dim \Sigma = k < d - 2$. It remains to consider the case $k=d-2$. Setting $p_0=2$ in
\eqref{nf2} gives
\[
\Big\|W\mathcal{T}_\lambda\mathcal{T}_\lambda^*\overline{W}\Big\|_
{\mathfrak{S}^\infty(L^2(\Sigma))}
\lesssim
\lambda^{-(d-2)}\log\lambda\,
\|W\|_{L^\infty(\Sigma)}^2.
\]
Interpolating this estimate with the trace-class bound \eqref{n20}, we
obtain, for every $2\leq p\leq\infty$,
\[
\Big\|W\mathcal{T}_\lambda\mathcal{T}_\lambda^*\overline{W}\Big\|_
{\mathfrak{S}^{p/(p-2)}(L^2(\Sigma))}
\lesssim
\lambda^{-\frac{2(d-2)}{p}}
(\log\lambda)^{\frac{2}{p}}
\|W\|_{L^{2p/(p-2)}(\Sigma)}^2.
\]
Since $k=d-2$, this is precisely \eqref{g37}, and hence proves the
endpoint estimate in Theorem~\ref{main theorem}.

We next prove the logarithm-free refinement. Let $p>2$ and choose
$2<p_0\leq p$. Since
\[
p_0>\frac{2(k+1)}{d-1}=2,
\]
estimate \eqref{nf1} and the duality principle yield
\[
\Biggl\|\sum_{j\in J}t_j|T_\lambda f_j|^2\Biggr\|_{L^{p/2}(\Sigma)}
\le C_{p_0}
\lambda^{-\frac{2(d-2)}{p}}
\|\{t_j\}\|_{l^{\frac{2p}{p_0+2}}(J)}.
\]
Given any $\alpha<p/2$, we may choose $p_0>2$ sufficiently close to $2$
so that
\[
\alpha<\frac{2p}{p_0+2}<\frac{p}{2}.
\]
By the monotonicity of sequence norms,
\[
\|\{t_j\}\|_{l^{\frac{2p}{p_0+2}}(J)}
\leq
\|\{t_j\}\|_{l^\alpha(J)},
\]
and hence
\[
\Biggl\|\sum_{j\in J}t_j|T_\lambda f_j|^2\Biggr\|_{L^{p/2}(\Sigma)}
\le C_\alpha
\lambda^{-\frac{2(d-2)}{p}}
\|\{t_j\}\|_{l^\alpha(J)}.
\]
This completes the proof of Theorem~\ref{main theorem} in the case
$k=d-2$.\qed
 \subsection{Proof of Theorem \ref{main theorem} when $k=d-1$ and $d\ge3$}

We begin by deriving \eqref{main estimate} in this case. We observe that inequality \eqref{nf2} covers \eqref{main estimate} when $2 \leq p \leq \frac{2d}{d-1}$. Therefore, it suffices to consider the case $\frac{2d}{d-1} < p \leq \infty$.

Now we consider the range $4 \leq p \leq \infty$. Recall that $\alpha(d-1,d,p) = \frac{p}{2}$. In particular, when $p = 4$, we have $\alpha(d-1,d,4) = 2$. Applying inequality \eqref{g9} and Young's inequality, we obtain
\begin{equation}\label{f11}
\begin{aligned}
\Big\| W \mathcal{T}_{\lambda} \mathcal{T}_{\lambda}^* \overline{W} &\Big\|_{\mathfrak{S}^2(L^2(\Sigma))} 
= \left( \iint_{B^{d-1} \times B^{d-1}} \left| W(x(z)) K(x(z), x(z')) \overline{W(x(z'))} \right|^2 \, dz\, dz' \right)^{1/2} \\
&\lesssim \left( \iint_{B^{d-1} \times B^{d-1}} |W(x(z))|^2 (1 + \lambda |z - z'|)^{-(d-1)} |\overline{W(x(z'))}|^2 \, dz\, dz' \right)^{1/2} \\
&\lesssim \|W\|_{L^4(\Sigma)} \left\| \int_{B^{d-1}} (1 + \lambda |z - z'|)^{-(d-1)} |\overline{W(x(z'))}|^2 \, dz' \right\|_{L^2(\Sigma)}^{1/2} \\
&\leq \|W\|_{L^4(\Sigma)}^2 \left| \int_{B^{d-1}} (1 + \lambda |z|)^{-(d-1)} \, dz \right|^{1/2} \\
&\lesssim \lambda^{-(d-1)/2} (\log \lambda)^{1/2} \|W\|_{L^4(\Sigma)}^2.
\end{aligned}
\end{equation}

Interpolating this estimate with the Schatten bounds given in \eqref{n20} yields
\[
\Big\| W \mathcal{T}_{\lambda} \mathcal{T}_{\lambda}^* \overline{W} \Big\|_{\mathfrak{S}^{p/(p-2)}(L^2(\Sigma))}
\lesssim \lambda^{-2(d-1)/p} (\log \lambda)^{2/p} \|W\|^2_{L^{2p/(p-2)}(\Sigma)}.
\]
This completes the proof of \eqref{main estimate} for the case $4 \leq p \leq \infty$.

Next, consider the range $\frac{2d}{d-1} < p < 4$. By interpolating the bound in \eqref{f11} with inequality \eqref{nf2}—which applies when $\dim \Sigma = d-1$ and $p_0 = \frac{2d}{d-1}$—we obtain
\[
\Big\| W \mathcal{T}_{\lambda} \mathcal{T}_{\lambda}^* \overline{W} \Big\|_{\mathfrak{S}^{\frac{2 p(d-2)}{2 p d - 4 d - 3 p + 4}}(L^2(\Sigma))}
\lesssim \lambda^{-2(d-1)/p} (\log \lambda)^{\frac{2 d - p}{p(d-2)}} \|W\|^2_{L^{\frac{2 p}{p-2}}(\Sigma)}.
\]
This completes the analysis for $\frac{2 d}{d-1} < p < 4$.

Finally, we turn to the logarithm-free refinement in Theorem \ref{main theorem} when $k=d-1$. The approach follows a similar pattern to the two-dimensional case. For any $p_0 \in (4, p)$, applying the bound \eqref{g9} and Young's inequality gives
\begin{align*}
\Big\| W \mathcal{T}_{\lambda} \mathcal{T}_{\lambda}^* \overline{W} &\Big\|_{\mathfrak{S}^2(L^2(\Sigma))}
= \left( \iint_{B^{d-1} \times B^{d-1}} |W(x(z))|^2 |K(x(z), x(z'))|^2 |\overline{W(x(z'))}|^2 \, dz\, dz' \right)^{1/2} \\
&\leq \|W\|_{L^{2p_0/(p_0-2)}(\Sigma)} \left\| \int_{B^{d-1}} |K(x(z), x(z'))|^2 |W(x(z'))|^2 \, dz' \right\|_{L^{p_0/2}(B^{d-1})}^{1/2} \\
&\leq \left\| (1 + \lambda |z|)^{-(d-1)} \right\|_{L^{p_0/4}(B^{d-1})}^{1/2} \|W\|_{L^{2p_0/(p_0-2)}(\Sigma)}^2 \\
&\leq C_{p_0} \lambda^{-2(d-1)/p_0} \|W\|_{L^{2p_0/(p_0-2)}(\Sigma)}^2.
\end{align*}
Interpolating this with the trace-class bound in \eqref{n20} yields
\[
\Big\| W \mathcal{T}_{\lambda} \mathcal{T}_{\lambda}^* \overline{W} \Big\|_{\mathfrak{S}^{(2p/p_0)'}(L^2(\Sigma))} \leq C_{p_0} \lambda^{-2(d-1)/p} \|W\|_{L^{2p/(p-2)}(\Sigma)}^2.
\]
 By the duality principle (Lemma \ref{l1}), this is equivalent to
\[
\Biggl\| \sum_{j \in J} t_j |T_{\lambda} f_j|^2 \Biggr\|_{L^{p/2}(\Sigma)} 
\leq C_{p_0} \lambda^{-2(d-1)/p} \left\| \{t_j\} \right\|_{l^{2p/p_0}(J)}.
\]
Since $p_0$ can be chosen arbitrarily close to 4, it follows that for any $\alpha < p/2$,
\[
\Biggl\| \sum_{j \in J} t_j |T_{\lambda} f_j|^2 \Biggr\|_{L^{p/2}(\Sigma)} 
\leq C_\alpha \lambda^{-2(d-1)/p} \left\| \{t_j\} \right\|_{l^\alpha(J)}.
\]
This completes the proof of Theorem \ref{main theorem} when $k=d-1$ in dimensions $d\ge3$.
\qed

\section{Sharpness in dimension two}\label{sec 6}
 In this section, we demonstrate that the $d=2$ case of Theorem \ref{main theorem} is essentially sharp for any intermediate value of $\#J$, except for the logarithmic loss on the two-sphere $\mathbb S^2$. 
 To achieve this, we will utilize a key estimate from \cite{frank2017}.

  When $M$ is the standard sphere $\mathbb{S}^2$ and $2\leq p\leq\infty$,
  consider an increasing function, $t_{\lambda}=t(\lambda),$ $\lambda>0$, satisfying the following conditions:
 $$\lim_{\lambda\to\infty}t_{\lambda}=+\infty,      \lim_{\lambda\to\infty}\dfrac{t_{\lambda}}{\lambda}=0.$$
Our goal is to construct, for sufficiently large $\lambda$, geodesic segments  $\gamma\subset\mathbb{S}^2$ and orthonormal systems $\{f_j\}_{j\in J_{\lambda}}\subset E_{\lambda}$ and $ \{g_j\}_{j\in J_{\lambda}}\subset E_{\lambda}$ with $\#J_{\lambda}\sim t_{\lambda}$, such that the following estimates hold:
 \begin{align}
 &\Big\|\sum_{j\in J_{\lambda}}|f_j|^2\Big\|_{L^{p/2}(\gamma)}\gtrsim\lambda^{1-\frac{2}{p}}t_{\lambda}^{\frac{2}{p}};\label{o1}\\
 &\Big\|\sum_{j\in J_{\lambda}}|g_j|^2\Big\|_{L^{p/2}(\gamma)}\gtrsim\lambda^{\frac{1}{2}}t^{\frac{1}{2}}_{\lambda}.\label{o2}
 \end{align}
 For each $\lambda > 0$, we construct orthonormal systems using spherical harmonics of degree $k \sim \lambda$. Recall that for any spherical harmonic $Y(x)$ of degree $k$ on $L^2(\mathbb{S}^2)$, the Laplace--Beltrami operator satisfies
\[
\Delta_{\mathbb{S}^2} Y(x) = -k(k+1) Y(x).
\]
Let $Y_k^{\alpha_k}$ denote an $L^2$-normalized spherical harmonic of degree $k$, where $\alpha_k \in [-k, k]$. Consider a sequence $\{t_k\}_{k\in\mathbb{N}}$ such that
\[
\lim_{k \to \infty} t_k = +\infty, \quad \text{and} \quad \lim_{k \to \infty} \frac{t_k}{k} = 0.
\]
Assuming that for sufficiently large $k$, $t_k \leq \frac{k}{2}$, we claim the following:
\begin{itemize}
 \item  The orthonormal system $\{Y_k^{\alpha_k}\}_{\alpha_k \in [t_k, 2t_k] \cap \mathbb{Z}}$ satisfies the bound corresponding to \eqref{o1}.
\item  The orthonormal system $\{Y_k^{\alpha_k}\}_{\alpha_k \in [k - 2t_k,\, k - t_k] \cap \mathbb{Z}}$ satisfies the bound corresponding to \eqref{o2}.
\end{itemize}
To verify these claims, we will utilize a key estimate from \cite{frank2017}, which provides essential bounds for these spherical harmonics.
\begin{lemma}[Proposition 15 in \cite{frank2017}]\label{l14}
If we parameterize points in $\mathbb{S}^2$ by usual spherical coordinates
$(\theta,\varphi)\in[0,\pi]\times[0,2\pi)
$, then 
\begin{enumerate}[(1)]
    \item 
there exist $\eta_1>0, K\geq1,$ and $c>0$
such that for all $k\geq K$, $\eta_1t_k/k\leq\theta\leq\frac{\pi}{2}$ and  $0\leq\varphi\leq2\pi$, 
$$\sum_{\alpha_k\in [t_k,2t_k]\cap\mathbb{Z}}|Y_k^{\alpha_k}(\theta,\varphi)|^2\geq c\,\frac{t_k}{\sin \theta};$$ 
\item there exist $\eta_2>0, K\geq 1$, and $c>0$ such that for all $k\geq K$, $0\leq\theta\leq\eta_2(t_k/k)^{1/2}$ and  $0\leq\varphi\leq2\pi$,
$$\sum_{\alpha_k\in [k-2t_k,k-t_k]\cap\mathbb{Z}}|Y_k^{\alpha_k}(\pi/2-\theta,\varphi)|^2\geq c\,k^{1/2}t_k^{1/2}.$$
\end{enumerate}
\end{lemma}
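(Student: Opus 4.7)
The plan is to exploit the explicit expression
\[Y^k_{\alpha_k}(\theta,\varphi)=c_{k,\alpha_k}P^{\alpha_k}_k(\cos\theta)e^{i\alpha_k\varphi},\qquad c_{k,\alpha_k}^2=\frac{2k+1}{4\pi}\cdot\frac{(k-\alpha_k)!}{(k+\alpha_k)!},\]
so that $|Y^k_{\alpha_k}(\theta,\varphi)|^2$ is $\varphi$-independent, and to feed this into a classical uniform WKB (Olver-type) asymptotic for the associated Legendre function in the oscillatory region $(k+\tfrac12)^2\sin^2\theta>\alpha_k^2$.  Such an expansion yields
\[|Y^k_{\alpha_k}(\theta,\varphi)|^2=\frac{(2k+1)/(2\pi^2)}{\sqrt{(k+\tfrac12)^2\sin^2\theta-\alpha_k^2}}\,\cos^2\!\bigl(\Phi_k(\alpha_k,\theta)\bigr)+\text{(lower order)},\]
with a smooth phase $\Phi_k$ whose $\alpha_k$-derivative is bounded below by a positive constant throughout the relevant ranges.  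Writing $\cos^2=\tfrac12+\tfrac12\cos(2\Phi_k)$ splits each of the sums in (1) and (2) into a non-oscillatory main term and an oscillatory remainder, the latter of which I would control by summation by parts in $\alpha_k$.

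For part (1), I would take $\alpha_k\in[t_k,2t_k]\cap\mathbb{Z}$ and $\sin\theta\ge\eta_1 t_k/k$.  Choosing $\eta_1\ge4$ forces $(k+\tfrac12)\sin\theta\ge2\alpha_k$ for every admissible $\alpha_k$, so
\[\sqrt{(k+\tfrac12)^2\sin^2\theta-\alpha_k^2}\;\asymp\;(k+\tfrac12)\sin\theta,\]
and each main-term summand has size $\asymp 1/\sin\theta$.  Summing over the $\sim t_k$ admissible integers and absorbing the oscillatory part via summation by parts (using the uniform lower bound on $\partial_{\alpha_k}\Phi_k$) delivers the desired $\gtrsim t_k/\sin\theta$.

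For part (2), I would localize near the equator and the turning point.  With $\alpha_k\in[k-2t_k,k-t_k]\cap\mathbb{Z}$ and $\theta=\pi/2-\theta'$, $\theta'\in[0,\eta_2(t_k/k)^{1/2}]$, expanding gives
\[(k+\tfrac12)^2\sin^2\theta-\alpha_k^2=\bigl((k+\tfrac12)-\alpha_k\bigr)\bigl((k+\tfrac12)+\alpha_k\bigr)-(k+\tfrac12)^2\theta'^2+O(t_k^2),\]
whose first term lies in $[2kt_k,4kt_k]$ to leading order while the second is at most $\eta_2^2 kt_k$, so the whole expression is $\asymp kt_k$ once $\eta_2$ is small (say $\eta_2\le 1$) and $k$ is large.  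Consequently each main-term summand has size $\asymp\sqrt{k/t_k}$, and summation over the $\sim t_k$ admissible integers produces $\sqrt{kt_k}=k^{1/2}t_k^{1/2}$, as claimed.

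The main obstacle I anticipate is securing the WKB asymptotic with sufficiently \emph{uniform} remainder estimates across both regimes, since in part (2) we are forced to operate within $O(\sqrt{t_k/k})$ of the turning point, where the plain cosine form of the WKB expansion degenerates.  The cleanest route is an Olver-type uniform expansion in Airy (or, in the part~(1) regime, Bessel) functions, after which the oscillatory part of the sum is still handled by summation by parts exactly as above.  A secondary subtlety is that $t_k$ grows with $k$, so all implicit constants and remainders must be traced to be uniform in $t_k$ as well; this is what ultimately dictates the choice of the thresholds $\eta_1,\eta_2$.
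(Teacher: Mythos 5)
The paper does not prove this lemma at all: it is quoted verbatim as Proposition~15 of Frank--Sabin \cite{frank2017}, and the authors simply cite that source. So your sketch is to be measured against the original argument, which likewise proceeds through asymptotics of the associated Legendre functions $P^{\alpha_k}_k(\cos\theta)$ in the oscillatory region; your overall route (explicit normalization constants, WKB/Olver asymptotics, splitting $\cos^2=\tfrac12+\tfrac12\cos(2\Phi_k)$, main term plus oscillatory remainder) is the natural one and the quantitative bookkeeping in both regimes --- $\sqrt{(k+\tfrac12)^2\sin^2\theta-\alpha_k^2}\asymp k\sin\theta$ in part (1) with $\eta_1\ge 4$, and $\asymp\sqrt{kt_k}$ in part (2) with $\eta_2$ small --- is correct and yields the claimed main terms.

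The one genuine gap is your treatment of the oscillatory remainder. You assert that $\partial_{\alpha_k}\Phi_k$ being ``bounded below by a positive constant'' lets you kill $\sum_{\alpha_k}\cos\bigl(2\Phi_k(\alpha_k,\theta)\bigr)$ by summation by parts. That condition is not sufficient: what you need is that $\partial_{\alpha_k}(2\Phi_k)$ stays uniformly away from $2\pi\mathbb{Z}$ (together with control of the second derivative). If, say, $\partial_{\alpha_k}\Phi_k\equiv\pi$, the derivative is bounded below by a positive constant yet $\cos(2\Phi_k)$ is essentially constant along the integers and the ``remainder'' is comparable to, and can be opposite in sign to, the main term, destroying the lower bound. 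That this is not a vacuous worry is shown by the endpoint $\theta=\pi/2$ of part (1): there $P^{\alpha_k}_k(0)=0$ whenever $k-\alpha_k$ is odd, so half of the individual summands vanish identically and no term-by-term lower bound exists; the lemma survives only because $\cos(2\Phi_k)$ alternates in sign across consecutive $\alpha_k$, i.e.\ because $\partial_{\alpha_k}(2\Phi_k)\approx-\pi$ there. What must actually be verified is that $-\partial_{\alpha_k}\Phi_k$, which is the azimuthal angle swept by the corresponding classical geodesic from its turning colatitude $\theta_0=\arcsin\bigl(\alpha_k/(k+\tfrac12)\bigr)$ to $\theta$, lies in a compact subinterval of $(0,\pi)$: in part (1) the constraint $(k+\tfrac12)\sin\theta\ge 2\alpha_k$ forces this angle into roughly $[\pi/3,\pi/2]$, and in part (2) the constraint $\theta'\le\eta_2\sqrt{t_k/k}$ with $\eta_2$ small forces it close to $\pi/2$; in both cases $\partial_{\alpha_k}(2\Phi_k)\in[-\pi,-2\pi/3]$ and the exponential sum is $O(1)$ per fixed $\theta$, i.e.\ $o(1)$ times the number of terms. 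This computation is exactly where your thresholds $\eta_1,\eta_2$ earn their keep, and it is missing from the proposal.

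By contrast, the obstacle you single out as the main one --- proximity to the turning point in part (2) --- is not actually a problem. The admissible $\theta'$ satisfy $\iota-\theta'\gtrsim\sqrt{t_k/k}$ where $\iota\asymp\sqrt{s/k}$ is the turning colatitude deviation, so the accumulated phase $\int_{\theta_0}^{\theta}\sqrt{(k+\tfrac12)^2-\alpha_k^2/\sin^2}\,\asymp k\iota^2\asymp t_k\to\infty$; the plain cosine form of the uniform expansion is therefore valid with relative error $o(1)$, and no separate Airy analysis is needed beyond quoting the standard uniform estimates.
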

Now, we are prepared to proceed with the proof of our sharpness result.
\begin{theorem}
    There exist $c>0 $, $K\geq1$ and curves $\gamma_1$ and $\gamma_2$ on $\mathbb{S}^2$ so that for all $k\geq K$
    and $2\leq p\leq\infty$ one has 
   \begin{align}
    &\Big\|\sum_{\alpha_k\in [t_k,2t_k]\cap\mathbb{Z}}|Y_k^{\alpha_k}(\theta,\varphi)|^2\Big\|_{L^{p/2}(\gamma_1)}\geq Ck^{1-\frac{2}{p}}t^{\frac{2}{p}}_k;\label{s1}\\
    &\Big\|\sum_{\alpha_k\in [k-2t_k,k-t_k]\cap\mathbb{Z}}|Y_k^{\alpha_k}(\theta,\varphi)|^2\Big\|_{L^{p/2}(\gamma_2)}\geq Ck^{\frac{1}{2}}t^{\frac{1}{2}}_k.\label{s2}
    \end{align}
    In particular, the $d=2$ case of Theorem \ref{main theorem} is essentially saturated by \eqref{s1} for $4\leq p\leq\infty$, and by $\eqref{s2}$ for $2\leq p\leq 4$.
\end{theorem}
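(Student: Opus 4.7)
The plan is to choose $\gamma_1$ and $\gamma_2$ so that the pointwise lower bounds of Lemma \ref{l14} apply on large portions of the curves, after which the integrations will be essentially immediate. Concretely, I will take $\gamma_2$ to be the equator $\{\theta = \pi/2\}$ and $\gamma_1$ to be a fixed meridian (a great circle through the poles), parameterized by the latitude $\theta$.

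For \eqref{s2}, setting the parameter $\theta = 0$ in Lemma \ref{l14}(2) will yield, uniformly in $\varphi$,
\[
\sum_{\alpha_k\in [k-2t_k,\,k-t_k]\cap\mathbb{Z}}|Y^k_{\alpha_k}(\pi/2,\varphi)|^2 \geq c\,k^{1/2} t_k^{1/2}.
\]
Since $\gamma_2$ has length $2\pi$ independent of $k$, integrating the $(p/2)$-th power over $\gamma_2$ gives \eqref{s2} at once.

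For \eqref{s1}, on the subarc of $\gamma_1$ where $\eta_1 t_k/k \leq \theta \leq \pi/2$, Lemma \ref{l14}(1) together with $\sin\theta \leq \theta$ will give $\sum_{\alpha_k \in [t_k,2t_k]\cap\mathbb{Z}} |Y^k_{\alpha_k}(\theta,\varphi_0)|^2 \geq c'\, t_k/\theta$. Raising to the $p/2$-th power and integrating in $\theta$, for $p > 2$ the integral is dominated by the lower endpoint:
\[
\int_{\eta_1 t_k/k}^{\pi/2} \theta^{-p/2}\, d\theta \sim (t_k/k)^{1-p/2},
\]
so
\[
\Big\|\sum|Y^k_{\alpha_k}|^2\Big\|_{L^{p/2}(\gamma_1)}^{p/2} \gtrsim t_k^{p/2} \cdot (t_k/k)^{1-p/2} = t_k\,k^{p/2-1},
\]
which after taking the $(p/2)$-th root is \eqref{s1}. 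The endpoint $p = 2$ will follow from $\int (t_k/\sin\theta)\,d\theta \gtrsim t_k\log(k/t_k) \geq ct_k$, and $p = \infty$ from evaluating the pointwise bound at $\theta = \eta_1 t_k/k$, giving $\gtrsim k$.

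To conclude the saturation of Theorem \ref{d2}, I will take the trivial sequence $t_j \equiv 1$ on $J_\lambda$, so that $\|\{t_j\}\|_{\ell^\alpha} = t_k^{1/\alpha}$ for every $\alpha \geq 1$. When $4 \leq p \leq \infty$, where $\alpha(1,2,p) = p/2$ and $2\delta(1,2,p) = 1 - 2/p$, the right-hand side of Theorem \ref{d2}(ii) is $\sim k^{1-2/p} t_k^{2/p}$ modulo a log, matched by \eqref{s1}. When $2 \leq p \leq 4$, where $\alpha(1,2,p) = 2$ and $2\delta(1,2,p) = 1/2$, the right-hand side is $\sim k^{1/2} t_k^{1/2}$, matched by \eqref{s2}. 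The only nontrivial computational step is the endpoint analysis of the $\theta$-integral in \eqref{s1}; every other step is a direct integration of the lower bounds provided by Lemma \ref{l14}.
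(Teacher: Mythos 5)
Your proposal is correct and follows essentially the same route as the paper: the equator for \eqref{s2} via Lemma \ref{l14}(2) at $\theta=0$, and a meridian arc for \eqref{s1} via Lemma \ref{l14}(1) with the $\theta$-integral dominated by the region $\theta\sim t_k/k$ (the paper simply restricts to the dyadic interval $[\eta_1 t_k/k,\,2\eta_1 t_k/k]$, which handles all $2\le p\le\infty$ uniformly and makes your separate $p=2$ and $p=\infty$ endpoint cases unnecessary). The saturation bookkeeping with $t_j\equiv 1$ also matches the intended comparison with Theorem \ref{d2}.
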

\begin{proof}
   We first establish inequality \eqref{s1}. Consider the curve $\gamma_1$ on $\mathbb{S}^2$ parameterized by $\gamma_1(\theta)=(\sin\theta, 0, \cos\theta)$, with $\theta\in[0,\pi/2].$ Applying item (1) of Lemma \ref{l14}, we obtain
    \begin{equation*}
        \begin{aligned}
\Big\|\sum_{\alpha_k\in [t_k,2t_k]\cap\mathbb{Z}}|Y_k^{\alpha_k}(\theta,\varphi)|^2\Big\|^{p/2}_{L^{p/2}(\gamma_1)}&\ge\int^{\pi/2}_{\eta_1t_k/k}\Big(\sum_{\alpha_k\in [t_k,2t_k]\cap\mathbb{Z}}|Y_k^{\alpha_k}(\theta,0)|^2\Big)^{p/2}\,d\theta\\
&\gtrsim \int^{\pi/2}_{\eta_1t_k/k}\Big(\frac{t_k}{\sin\theta}\Big)^{\frac{p}{2}}\,d\theta\gtrsim  t^{p/2}_k\int^{2\eta_1t_k/k}_{\eta_1t_k/k}{\theta}^{-\frac{p}{2}}\,d\theta\\
&\gtrsim   k^{\frac p2-1}t_k.\\
        \end{aligned}
    \end{equation*}
Next, to prove \eqref{s2}, consider the curve $\gamma_2$ as the equator, that is, $\theta=\pi/2$. Applying item (2) of Lemma \ref{l14}, we conclude that
\begin{align*}
    \Big\|\sum_{\alpha_k\in [k-2t_k,k-t_k]\cap\mathbb{Z}}|Y_k^{\alpha_k}(\theta,\varphi)|^2\Big\|^{p/2}_{L^{p/2}(\gamma_2)}&=\int^{2\pi}_0\Big(\sum_{\alpha_k\in [k-2t_k,k-t_k]\cap\mathbb{Z}}|Y_k^{\alpha_k}(\pi/2,\varphi)|^2\Big)^{p/2}d\varphi\\
    &\geq \int^{2\pi}_0\big(Ck^{\frac{1}{2}}t_k^{\frac{1}{2}}\big)^{p/2}\,d\varphi\\
    &\gtrsim (k^{1/2}t_k^\frac{1}{2})^{p/2}.
\end{align*}
\end{proof}

\section{Sharpness in higher dimensions}\label{sec 7}

Finally, we address the sharpness in higher dimensions. Suppose $d\ge3$ and $\dim\Sigma = k$. We work on the standard $d$-sphere. Let 
$$
\lambda^2 = l(l+d-1),\quad l\in\mathbb{N},
$$
and fix a real orthonormal basis $\{Y_i\}_{i=1}^{\dim E_\lambda}$ of the eigenspace $E_\lambda$. Define
$$
Z_\lambda(p,x) =\sum_{i=1}^{\dim E_\lambda} Y_i(p)\,Y_i(x).
$$
By symmetry, $Z_\lambda(p,p)=C(\lambda)\sim\lambda^{d-1}$. For any $p\in\mathbb{S}^d$, the normalized zonal eigenfunction concentrated at $p$ is
$$
Z_\lambda^p(x)=\frac{Z_\lambda(p,x)}{\sqrt{Z_\lambda(p,p)}}.
$$

\begin{prop}
For $p\in\mathbb S^d$,
\[
|Z_\lambda^p(x)|
\lesssim
\begin{cases}
\lambda^{\frac{d-1}{2}},
& \operatorname{dist}(x,p)\leq\lambda^{-1},\\[4pt]
\operatorname{dist}(x,p)^{-\frac{d-1}{2}},
& \lambda^{-1}\leq\operatorname{dist}(x,p)\leq\frac{\pi}{2}.
\end{cases}
\]
Moreover,
\[
|Z_\lambda^p(x)|
\gtrsim
\lambda^{\frac{d-1}{2}}
\qquad
\text{if }
\operatorname{dist}(x,p)\leq c\lambda^{-1}
\]
for some sufficiently small constant $c>0$.
\end{prop}

\begin{prop}
For any $p_1,p_2\in\mathbb S^d$,
$$
\bigl\langle Z_\lambda^{p_1},Z_\lambda^{p_2}\bigr\rangle
=\frac{Z_\lambda(p_1,p_2)}{C(\lambda)}.
$$
\end{prop}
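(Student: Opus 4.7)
My plan is to unfold the definition $Z_\lambda^{p}(x)=Z_\lambda(p,x)/\sqrt{Z_\lambda(p,p)}$ and then use $L^2$-orthonormality of the basis $\{Y_i\}$ to evaluate the inner product directly. Specifically, I would first write
\[
\bigl\langle Z_\lambda^{p_1}, Z_\lambda^{p_2}\bigr\rangle
= \frac{1}{\sqrt{Z_\lambda(p_1,p_1)\,Z_\lambda(p_2,p_2)}}\,\bigl\langle Z_\lambda(p_1,\cdot),\,Z_\lambda(p_2,\cdot)\bigr\rangle_{L^2(\mathbb{S}^d)},
\]
so that the problem splits into evaluating the $L^2$ pairing of two sections of the reproducing kernel and identifying the two normalizing factors.

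For the numerator, I would substitute the spectral expansion $Z_\lambda(p,x)=\sum_i Y_i(p)\,Y_i(x)$ (the formula in the statement indicates that the $Y_i$ are chosen real-valued, a standard convention on $\mathbb{S}^d$), interchange sum and integral, and use $\int_{\mathbb{S}^d} Y_iY_j\,dV=\delta_{ij}$. The double sum collapses to
\[
\sum_i Y_i(p_1)\,Y_i(p_2)=Z_\lambda(p_1,p_2),
\]
which is simply the reproducing-kernel identity $\langle K(p_1,\cdot),K(p_2,\cdot)\rangle=K(p_1,p_2)$.

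For the denominator, I would argue that $p\mapsto Z_\lambda(p,p)=\sum_i |Y_i(p)|^2$ is independent of the choice of orthonormal basis of $E_\lambda$, hence invariant under the transitive isometric action of $O(d+1)$ on $\mathbb{S}^d$, and therefore constant. Its common value is exactly the $C(\lambda)\sim\lambda^{d-1}$ introduced in the preceding proposition, so that $\sqrt{Z_\lambda(p_1,p_1)\,Z_\lambda(p_2,p_2)}=C(\lambda)$ and the claimed identity drops out.

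I do not expect any serious obstacle: the proposition is a direct manifestation of the reproducing property together with the basis-independence of the diagonal of the kernel. The only subtlety worth flagging is a convention issue—if one prefers complex spherical harmonics, the formula for $Z_\lambda$ should carry a complex conjugate on one factor, but the identical chain of equalities then yields the Hermitian-symmetric version $\langle Z_\lambda^{p_1},Z_\lambda^{p_2}\rangle=Z_\lambda(p_1,p_2)/C(\lambda)$ with $Z_\lambda(p_1,p_2)=\overline{Z_\lambda(p_2,p_1)}$, which is harmless for the applications that follow.
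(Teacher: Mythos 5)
Your argument is correct and is exactly the standard reproducing-kernel computation: expand both factors in the orthonormal basis $\{Y_i\}$, use $\int_{\mathbb S^d}Y_iY_j\,dV=\delta_{ij}$ to collapse the double sum to $Z_\lambda(p_1,p_2)$, and note that $Z_\lambda(p,p)$ is constant by basis-independence and rotational symmetry. The paper itself gives no proof but defers to \cite[Lemma 1.2.5]{DF2013}, and your derivation (including the remark about the diagonal being constant, which the paper already asserts ``by symmetry'') is the same argument.
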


The proofs of these propositions can be found in \cite[Lemma 1.2.5]{DF2013}.

Let $0<\beta<1$ be a parameter to be chosen later. Fix a sufficiently large constant $C>0$, and select a $C\lambda^{-1+\beta}$–separated set $\{p_j\}_{j\in J}\subset\Sigma$ so that for all $i\neq j$,
\[
C\,\lambda^{-1+\beta}\le\mathrm{dist}(p_i,p_j)\le\frac\pi2.
\]
In this configuration,
\[
\#J\sim C^{-k}\,\lambda^{\,k(1-\beta)}.
\]
Moreover, by the preceding propositions, for $i\neq j$ we have
\[
\bigl|\langle Z_\lambda^{p_i},\,Z_\lambda^{p_j}\rangle\bigr|
\lesssim\lambda^{-\frac{d-1}{2}\,\beta}.
\]
We will need the following lemma, which follows from Gerschgorin’s Circle Theorem. For completeness, we include a proof here.
\begin{lemma}\label{l17}
Let $H$ be a Hilbert space, and let $\{e_i\}_{i=1}^n\subset H$ be unit vectors satisfying
\[
|\langle e_i, e_j\rangle|<\frac{\delta}{n-1},\quad i\neq j,
\]
for some $0<\delta<\tfrac12$. Then there exists an $n\times n$ matrix $A=[A_{ij}]$ with
\[
|A_{ij}-\delta_{ij}|\le\delta\quad(1\le i,j\le n),
\]
such that the vectors
\[
f_i=\sum_{j=1}^nA_{ij}e_j,\quad i=1,\dots,n,
\]
form an orthonormal system in $H$.
\end{lemma}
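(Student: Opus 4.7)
The plan is to take $A$ to be the inverse square root of the Gram matrix of $\{e_i\}$; this forces the $f_i$ to be orthonormal automatically, and reduces the entire lemma to an entrywise bound on $A-I$ which will follow from a Neumann-type series and a short scalar inequality.

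Concretely, I will form the Gram matrix $G=[\langle e_j,e_i\rangle]_{1\le i,j\le n}=I+R$, where $R$ is Hermitian with $R_{ii}=0$ and $|R_{ij}|<\delta/(n-1)$ for $i\ne j$. Gerschgorin's Circle Theorem, applied to $R$, places every eigenvalue of $R$ in a disk of radius $\sum_{j\ne i}|R_{ij}|<\delta$ about the origin, yielding $\|R\|_{\mathrm{op}}<\delta<1/2$. In particular $G=I+R$ is positive definite. Setting $A:=G^{-1/2}$ (Hermitian), a direct computation gives $\langle f_i,f_k\rangle=(AGA^*)_{ik}=(G^{-1/2}GG^{-1/2})_{ik}=\delta_{ik}$, so the orthonormality of $\{f_i\}$ comes for free.

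The main technical step, and the only real obstacle, is the entrywise bound on $A-I$. My plan is to use the binomial expansion
\[
A-I=(I+R)^{-1/2}-I=\sum_{k=1}^{\infty}\binom{-1/2}{k}R^{k},
\]
which converges in operator norm since $\|R\|_{\mathrm{op}}<1$, together with the standard observation $|M_{ij}|\le\|M\|_{\mathrm{op}}$ for any $n\times n$ matrix $M$, to obtain
\[
|A_{ij}-\delta_{ij}|\le\sum_{k\ge1}\Bigl|\binom{-1/2}{k}\Bigr|\,\|R\|_{\mathrm{op}}^{k}=(1-\|R\|_{\mathrm{op}})^{-1/2}-1\le(1-\delta)^{-1/2}-1.
\]
The lemma will then follow from the elementary inequality $(1-\delta)^{-1/2}-1\le\delta$ on $[0,1/2]$, which is equivalent to $(1+\delta)^{2}(1-\delta)=1+\delta-\delta^{2}-\delta^{3}\ge1$, i.e.\ to $1\ge\delta(1+\delta)$, and this is obvious since $\delta(1+\delta)\le(1/2)(3/2)=3/4$ throughout the range. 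The organizing principle is that Gerschgorin delivers simultaneously the invertibility of $G$ and the operator-norm control needed to sum the series; once these are in place the remainder is routine.
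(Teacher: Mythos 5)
Your proof is correct and follows essentially the same route as the paper: form the Gram matrix $G=I+R$, apply Gerschgorin to get $\|R\|_{\mathrm{op}}<\delta<1$, set $A=G^{-1/2}$ so that orthonormality is automatic, and bound $|A_{ij}-\delta_{ij}|$ by $\|A-I\|_{\mathrm{op}}$. Your treatment of the final entrywise bound (via the binomial series and the elementary inequality $(1-\delta)^{-1/2}-1\le\delta$ for $0<\delta<\tfrac12$) is in fact more careful than the paper's, which states this step rather tersely.
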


\begin{proof}
Let $G = [\langle e_i,e_j\rangle]_{i,j=1}^n$ be the Gram matrix.  By hypothesis,
\[
G_{ii} = 1,\qquad |G_{ij}| < \frac{\delta}{n-1}\quad(i\neq j),
\]
with $0<\delta<\tfrac12$. 
Write $G=I+E$ with
\[
E_{ij} = \begin{cases}\langle e_i,e_j\rangle,&i\neq j,\\0,&i=j.\end{cases}
\]
Then for  each $i$, the row–sum norm of the off–diagonal part
satisfies
\[
\|E\|_\infty =\max_i\sum_{j\neq i}|E_{ij}|
< (n-1)\cdot\frac{\delta}{n-1} = \delta < 1.
\]
  By Gerschgorin’s theorem, each eigenvalue
$\lambda_i$ of $G=I+E$ lies in $\{\,z:|z-1|\le\delta\}$, so $G$ is positive‐definite
and invertible. Since the spectral norm $\|E\|_2\le\|E\|_\infty<1$, the Neumann series for
$(I+E)^{-1/2}$ converges in operator norm.
Hence we may set
\[
A =G^{-1/2} =(I+E)^{-1/2}.
\]

Define
\[
f_i =\sum_{j=1}^n A_{ij}\,e_j.
\]
Then the Gram matrix of $\{f_i\}$ is
\[
\langle f_i,f_j\rangle =(A\,G\,A^*)_{ij}
=[G^{-1/2}\,G\,G^{-1/2}]_{ij}
=\delta_{ij},
\]
so $\{f_i\}$ is orthonormal.

Finally, we have for all $i,j$
\[
\max_{i,j}|a_{ij}-\delta_{ij}| \le \max_{1\le i\le n}{|\lambda_i^{-\frac12}-1|} \le \delta.
\]
This completes the proof.
\end{proof}

Let us return to the example.  For simplicity, let $Z_j = Z_\lambda^{p_j}$.  By the preceding lemma with  $\delta\sim (\#J)\,\lambda^{-\frac{d-1}{2}\beta}$, there exists a matrix $A = [a_{ij}]$ such that
\[
f_i = \sum_{j\in J} a_{ij}\,Z_j,
\qquad
\bigl|a_{ij}-\delta_{ij}\bigr|\lesssim (\#J)\,\lambda^{-\frac{d-1}{2}\beta},
\]
and the family $\{f_j\}_{j\in J}$ is orthonormal.

Our goal is to choose $\beta$ so that $| f_i(x)|\sim \lambda^{\frac{d-1}{2}}$ whenever $x$ lies in the $\lambda^{-1}$-neighborhood of $p_i$.  To ensure this, note that for such $x$,
\[
\Bigl|\sum_{j\neq i}a_{ij}\,Z_j(x)\Bigr|
\le
(\#J)^2\lambda^{\frac{d-1}{2}(1-\beta)}\lambda^{-\frac{d-1}{2}\beta}
\ll
\lambda^{\frac{d-1}{2}}.
\]
This holds provided
\[
(\#J)\lambda^{-\frac{d-1}{2}\beta}
=O\bigl(C^{-k}\,\lambda^{(1-\beta)k-\frac{d-1}{2}\beta}\bigr)
\ll1,
\]
which is achieved by taking
\begin{equation}\label{lower beta}
  \frac{2k}{2k + d - 1}\leq\beta<1,  
\end{equation}
and $C$ sufficiently large.
For the system $\{f_j\}$ constructed above, we then have
\[
|f_j(x)| \ge \lambda^{\frac{d-1}{2}}
\quad\text{on }B_{p_j}(\lambda^{-1}),
\]
and $\mathrm{dist}(p_i,p_j)\gg \lambda^{-1}$ for $i\neq j$.  Hence, for $p\ge2$, $\dim\Sigma\le d-1$ and $t_j\ge0$,
\[
\begin{aligned}
\Bigl\|\sum_{j\in J}t_j\,|f_j|^2\Bigr\|_{L^{p/2}(\Sigma)}
&\ge
\Bigl(\sum_{j\in J}\int_{B_{p_j}(\lambda^{-1})\cap\Sigma}
|t_j|^{\frac p2}\,|f_j|^p\Bigr)^{\frac2p}\\
&\ge
\Bigl(\sum_{j\in J}|t_j|^{\frac p2}\,\lambda^{\frac{(d-1)p}{2}}\,\lambda^{-k}\Bigr)^{\frac2p}\\
&\sim
\lambda^{\,d-1-\frac{2k}{p}}\,
\bigl\|t_j\bigr\|_{l^{\frac p2}(J)}.
\end{aligned}
\]
This establishes the sharpness of Theorem \ref{main theorem} for all
submanifolds of codimension at least two, and for hypersurfaces in the range
$4\leq p\leq\infty$, up to the stated logarithmic losses.

\begin{remark}
Finally, we note that realizing the sharpness of Theorem \ref{main theorem}
in the codimension-at-least-two regime requires a nontrivial upper bound on
$\#J$, or equivalently, a positive lower bound on $\beta$ (see, e.g., \eqref{lower beta}).
 By contrast, the $d=2$ case of Theorem \ref{main theorem} is sharp for \textbf{all} values of $\#J$.  

  Indeed, take a codimension-2 submanifold $\Sigma\subset \mathbb S^d$ and let $Q=E_\lambda$, so that
  \[
    \#J=\dim E_\lambda\sim\lambda^{d-1}.
  \]
  The sharp pointwise Weyl law on the sphere gives
  \[
    \sum_{\lambda_j\in I_\lambda}\bigl|e_{\lambda_j}(x)\bigr|^2
    \sim\lambda^{d-1},
    \quad x\in \mathbb S^d,
  \]
  and hence
  \[
    \Bigl\|\sum_{\lambda_j\in I_\lambda}|e_{\lambda_j}|^2\Bigr\|_{L^{p/2}(\Sigma)}
    \sim\lambda^{d-1}.
  \]
  On the other hand, Theorem \ref{main theorem} yields the upper bound
  \[
    \Bigl\|\sum_{\lambda_j\in I_\lambda}|e_{\lambda_j}|^2\Bigr\|_{L^{p/2}(\Sigma)}
    \lesssim
    \lambda^{{d-1}-\frac{2d-4}{p}+\frac{2(d-1)}{p}}
    =
    \lambda^{{d-1}+\frac{2}{p}}.
  \]
  It follows that, when $k\le d-2$
  the exponent in Theorem \ref{main theorem} strictly exceeds the true exponent $d-1$ unless $p=\infty$.
\end{remark}

To demonstrate the sharpness for the case $k = d-1$, $d \ge 3$, and $2\le p< 4$, we need the following equatorial harmonic lift example.
Let $d\geq3$, let $M=\mathbb S^d$, and let
$$
\Sigma=\mathbb S^{d-1}
=\{x\in\mathbb S^d:x_{d+1}=0\}.
$$
Let
$$
\lambda^2=l(l+d-1),\qquad l\in\mathbb N,
$$
and let $\{Y_j\}_{j=1}^{N_l}$ be an orthonormal basis of the space of spherical harmonics of degree $l$ on $\mathbb S^{d-1}$. Recall that
$$
N_l=\dim\mathcal H_l(\mathbb S^{d-1})
\sim l^{d-2}\sim\lambda^{d-2};
$$
see, for instance, \cite{DF2013}.

Write a point of $\mathbb S^d$ as
$$
x=(\sin\theta\,\omega,\cos\theta),
\qquad
\omega\in\mathbb S^{d-1},\quad 0\leq\theta\leq\pi,
$$
and define
$$
f_j(\theta,\omega)
=
c_l(\sin\theta)^lY_j(\omega),
\qquad 1\leq j\leq N_l,
$$
where $c_l>0$ is chosen so that
$$
\|f_j\|_{L^2(\mathbb S^d)}=1.
$$

Regarding $r^lY_j(\omega)$ as a homogeneous harmonic polynomial on $\mathbb R^d$ and extending it trivially in the last variable, we obtain a homogeneous harmonic polynomial of degree $l$ on $\mathbb R^{d+1}$. Hence $f_j$ is a spherical harmonic of degree $l$ on $\mathbb S^d$; see, e.g., \cite{DF2013}. It follows that $f_j\in E_\lambda$, and the family $\{f_j\}_{j=1}^{N_l}$ is orthonormal in $L^2(\mathbb S^d)$.

Moreover,
$$
c_l^{-2}
=
\int_0^\pi(\sin\theta)^{2l+d-1}\,d\theta
\sim l^{-1/2},
$$
and therefore
$$
c_l^2\sim l^{1/2}\sim\lambda^{1/2}.
$$
On the equator $\Sigma$, we have
$$
f_j|_\Sigma=c_lY_j.
$$
By the addition formula for spherical harmonics \cite{DF2013},
$$
\sum_{j=1}^{N_l}|Y_j(\omega)|^2
=
\frac{N_l}{|\mathbb S^{d-1}|}.
$$
Consequently,
$$
\sum_{j=1}^{N_l}|f_j(\omega)|^2
=
c_l^2\frac{N_l}{|\mathbb S^{d-1}|}
\sim
\lambda^{d-\frac32},
\qquad \omega\in\Sigma.
$$
Since the density on the left-hand side is constant on $\Sigma$, it follows that, for every $p\geq2$,
$$
\Biggl\|
\sum_{j=1}^{N_l}|f_j|^2
\Biggr\|_{L^{p/2}(\Sigma)}
\sim
\lambda^{d-\frac32}.
$$

We now compare this lower bound with Theorem \ref{main theorem}. If
$
2\leq p\leq\frac{2d}{d-1},
$
then
$$
2\delta(d-1,d,p)
=
\frac{d-1}{2}-\frac{d-2}{p},
\qquad
\frac1{\alpha(d-1,d,p)}
=
\frac12+\frac1p.
$$
Hence
$$
2\delta(d-1,d,p)
+
\frac{d-2}{\alpha(d-1,d,p)}
=
d-\frac32.
$$
Taking $t_j=1$ and using $N_l\sim\lambda^{d-2}$, we obtain the sharpness in this case.

For $\frac{2d}{d-1}\leq p<4$, we have
\[
2\delta(d-1,d,p)
=
d-1-\frac{2(d-1)}{p}
\]
and
\[
\frac{d-2}{\alpha(d-1,d,p)}
=
\frac{4d-p-4}{2p}.
\]
Hence
\[
2\delta(d-1,d,p)
+\frac{d-2}{\alpha(d-1,d,p)}
=
d-\frac32.
\]
Since $\#J\sim\lambda^{d-2}$, it follows that
\[
\lambda^{2\delta(d-1,d,p)}
(\#J)^{1/\alpha(d-1,d,p)}
\sim
\lambda^{d-\frac32},
\]
which agrees with the lower bound from the above construction.

\section{Sharpness on curves with non-vanishing curvature}\label{sec 8}
Define the $n$-th order highest-weight spherical harmonic
\[
H_n(x_1,x_2,x_3) = C_n(x_1 + ix_2)^n,
\]
where $C_n$ is the normalization constant such that $\|H_n\|_{L^2(\mathbb{S}^2)} = 1$. The eigenvalue of $H_n$ is $\lambda\sim n.$ This function is concentrated on the equator $\{(x_1,x_2,x_3) \in \mathbb{S}^2 : x_3 = 0\}$.
We also set
\[
    H_n^{\phi}(x)=C_n\bigl(x_1+i(x_2\cos\phi+x_3\sin\phi)\bigr)^n,
\]
the spherical harmonic $H_n$ rotated by an angle $\phi$ about the $x_1$-axis. Choose a bump function $\psi\in C_0^{\infty}(-1,1)$ such that $\int \psi=1$. Then define
\[
    u_{n}(x)=\int \psi(n^{1/3}\phi)\,H_n^{\phi}(x)\,d\phi,
\]
and set $\bar{u}_n(x)=u_n(x)/\|u_n(x)\|_{L^2(\mathbb{S}^2)}$. Note that $u_n$ is clearly a spherical harmonic. The construction of this eigenfunction comes from \cite[Remark 5.4]{bgt2007}, see also \cite{Tacy}.

Let $SO(3)$ be the special orthogonal group in three dimensions. For any $Q\in SO(3)$, define
\[
    H_{n,Q}(x)=H_n(Qx)\quad\text{and}\quad H^{\phi}_{n,Q}(x)=H^{\phi}_n(Qx).
\]
Then $H^{\phi}_{n}(x)=H_{n,\bar{Q}}(x)$, where $\bar{Q}\in SO(3)$ denotes the clockwise rotation by angle $\phi$ about the $x_1$-axis.

We require the following lemma, which appears in Lemma 8 of \cite{Han}. 

\begin{lemma}\label{inner-product}
Let $Q_1,Q_2\in SO(3)$. Suppose that $H_{n,Q_1}$ and $H_{n,Q_2}$ concentrate on the great circles $S_1$ and $S_2$  with respect to poles $p_1$ and $p_2$. 
Let $\alpha$ be the angle between the poles $p_1$ and $p_2$.
Then
\[
  \bigl|\langle H_{n,Q_1},H_{n,Q_2}\rangle_{\mathbb{S}^2}\bigr|
  = \cos^{2n}\Bigl(\frac{\alpha}{2}\Bigr).
\]
\end{lemma}

Let $\theta=(\theta_1,\theta_2)$ be a spherical coordinate system with $\theta_2\in[0,\pi]$ and $\theta_1\in[0,2\pi]$, and
\[
\begin{cases}
    x_3=\cos\theta_2,\\
    x_2=\sin\theta_2\sin\theta_1,\\
    x_1=\sin\theta_2\cos\theta_1.
\end{cases}
\]

\begin{lemma}\label{concentrate}
We have $|\bar{u}_n(\theta_1,\theta_2)|\sim n^{1/3}$ on an $n^{-1/3}\times n^{-2/3}$ region centered at $(0,\pi/2)$, and $\bar{u}_n(\theta_1,\theta_2)$ decays exponentially outside an $n^{-1/3}$-neighborhood of the equator $\{\theta:\theta_1\in[0,2\pi],\,\theta_2=\pi/2\}$.
\end{lemma}
\begin{proof}
Using Lemma \ref{inner-product} and $\int\psi=1$,
\[
\|u_n\|_{L^2(\mathbb{S}^2)}^2
=\iint \psi(n^{1/3}\phi_1)\psi(n^{1/3}\phi_2)\cos^{2n}\Bigl(\tfrac{\phi_1-\phi_2}{2}\Bigr)\,d\phi_1d\phi_2.
\]
Since $\cos t = e^{-t^2/2+O(t^4)}$, we replace $\cos^{2n}\Bigl(\tfrac{\phi_1-\phi_2}{2}\Bigr)$ by $\exp\,\bigl(-\tfrac n4(\phi_1-\phi_2)^2\bigr)$ up to negligible error, so the integral localizes to $|\phi_1-\phi_2|\lesssim n^{-1/2}$. After the change of variables $\varphi_i=n^{1/3}\phi_i$ we get $\|u_n\|_{L^2(\mathbb{S}^2)}^2\sim n^{-5/6}$, hence $\|u_n\|_{L^2(\mathbb{S}^2)}\sim n^{-5/12}$.

At $(\theta_1,\theta_2)=(0,\pi/2)$ one has
$H_n^\phi\sim n^{1/4}$, so
$u_n(0,\pi/2)\sim n^{-1/12}$ and therefore
$|\bar u_n(0,\pi/2)|\sim n^{1/3}$.
Multiplying by $e^{-in\theta_1}$ removes the oscillation.For $|\phi|\le n^{-1/3}$ and $|\theta_2-\pi/2|\le \epsilon n^{-1/2}$,
\[
\bigl|\partial_{\theta_1}(e^{-in\theta_1}H_n^\phi)\bigr|\lesssim n^{7/12},\qquad
\bigl|\partial_{\theta_2}(e^{-in\theta_1}H_n^\phi)\bigr|\lesssim n^{11/12}.
\]
Averaging in $\phi$ and normalizing give the bounds $n^{2/3}$ and $n$,
respectively, for $e^{-in\theta_1}\bar u_n$, so on a rectangle with
$\Delta\theta_1\sim n^{-1/3}$ and $\Delta\theta_2\sim n^{-2/3}$ around
$(0,\pi/2)$ the value of $|e^{-in\theta_1}\bar u_n|$ stays within a
constant factor, giving $|\bar u_n|\sim n^{1/3}$ there.

Finally, since
\[
u_n(x)=\int_{|\phi|\lesssim n^{-1/3}}\psi(n^{1/3}\phi)\,H_n^\phi(x)\,d\phi,
\]
it is an $n^{-1/3}$--wide superposition of highest-weight spherical
harmonics. For each $\phi$, $H_n^\phi$ decays exponentially once
$\operatorname{dist}(x,\Gamma_\phi)\gtrsim n^{-1/2}$, where $\Gamma_\phi$
is its great circle. Hence, if
$\operatorname{dist}(x,\Gamma_0)\ge A n^{-1/3}$ for sufficiently large
$A$, then for all contributing $\phi$ we have
$\operatorname{dist}(x,\Gamma_\phi)\gtrsim n^{-1/3}$, so the integrand is
exponentially small. Therefore $|u_n(x)|$, and thus $|\bar u_n(x)|$,
decays exponentially outside an $A n^{-1/3}$--neighborhood of the equator.

\end{proof}

For the sake of simplicity, we assume $\Sigma$ to be the latitude circle
\[
\gamma=\{(\theta_1,\theta_2):\ \theta_2=\pi/4\}.
\]
The general case of a curve with non-vanishing geodesic curvature on $\mathbb S^2$ is similar.

\begin{lemma}\label{distance}
On the unit sphere $\mathbb{S}^2$, let $p_1$ and $p_2$ lie on a latitude circle $\gamma$ of latitude $\pi/4$, and suppose their longitudes differ by $\alpha$. Let $S_1$ and $S_2$ be the great circles passing through $p_1$ and $p_2$, respectively, and tangent to $\gamma$ at those points. Then the angle $\theta$ between $s_1$ and $s_2$ at their intersection point equals the spherical distance $\mathrm{d}_{\mathbb{S}^2}(p_1,p_2)$.
\end{lemma}

\begin{proof}
This can be verified explicitly by an elementary coordinate computation on $\mathbb{S}^2$. More geometrically, it is easy to see that the angle is, to first order, proportional to the distance: work in normal geodesic coordinates centered at $p_1$ so that $S_1$ coincides with the $x_1$-axis. Since $\gamma$ has nonzero geodesic curvature, it is locally quadratic (parabolic) in $x_1$, whereas $S_2$ is, to first order at $p_2$, just its tangent line.

\end{proof}

Let $0<\varepsilon\ll1$. Let $C>0$ be sufficiently large constant. 
Choose a $C\lambda^{-1/6+\varepsilon}$–separated set of points $\{p_j\}_{j\in J}\subset\gamma$ so that for all $i\ne j$,
\[
C\,\lambda^{-1/6+\varepsilon}\le \mathrm{d}_{\gamma}(p_i,p_j)\le \tfrac{\pi}{4},
\]
where $\mathrm{d}_{\gamma}(p_i,p_j)$ is the arc length along $\gamma$. In this configuration,
\[
\#J=O\bigl(C^{-1}\lambda^{1/6-\varepsilon}\bigr).
\]
For each $p_i$ let $S_i$ be the great circle tangent to $\gamma$ at $p_i$. We construct $L^2$–normalized spherical harmonics concentrating near $p_i$ by
\[
u_i(x)=\bar{u}_{n}(Q_i x),
\]
where $Q_i\in SO(3)$ is chosen so that $Q_iS_i=\{x\in\mathbb{S}^2:\ x_3=0\}$ and $Q_ip_i=(1,0,0)$. Let $\theta_{ij}$ denote the angle between $S_i$ and $S_j$. Then we have the following lemma.

\begin{lemma}
When $C>0$ sufficiently large, we have for any $i\ne j$
\[
|\langle u_i,u_j\rangle_{\mathbb{S}^2}|\;\lesssim\; e^{-\lambda^{1/6}}.
\]
\end{lemma}

\begin{proof}
For $k=i,j$, let $S_k$ be the great circle tangent to $\gamma$ at $p_k$, and let $l_k$ be the common tangent line to $S_k$ and $\gamma$ at $p_k$. By Lemma \ref{distance},
\[
C\,\lambda^{-1/6+\varepsilon}\le \mathrm{d}_{\gamma}(p_i,p_j)\sim \mathrm{d}_{\mathbb{S}^2}(p_i,p_j)=\theta_{ij}.
\]
 Hence
\[
\mathrm{dist}(p_j,l_i)=\mathrm{dist}(p_i,l_j)\sim \mathrm{d}_{\gamma}(p_i,p_j)\sin\frac{\theta_{ij}}2
\sim \bigl(\mathrm{d}_{\gamma}(p_i,p_j)\bigr)^2
\ge C^2\lambda^{-1/3+2\varepsilon}\gg \lambda^{-1/3}.
\]

By Lemma \ref{inner-product} and Lemma \ref{concentrate} we finish the proof.
\end{proof}

By the above lemma and the argument from the previous section, there exists a matrix $B=[b_{ij}]$ such that
\[
g_i=\sum_{j\in J} b_{ij}\,u_j,
\qquad
\bigl|b_{ij}-\delta_{ij}\bigr|\lesssim \lambda^{1/6-\epsilon}e^{-\lambda^{1/6}}\lesssim e^{-c\lambda^{1/12}},
\]
and the family $\{g_j\}_{j\in J}$ is orthonormal. For any $x\in N_{\lambda^{-1/3}}(p_i)\cap\gamma$ we have
\[
\Bigl|\sum_{j\ne i} b_{ij}\,u_j(x)\Bigr|
\le C'\,(\#J)\,\lambda^{1/3} e^{-c\lambda^{1/12}}
\ll \lambda^{1/3},
\]
hence $|g_i(x)|\sim \lambda^{1/3}$ on that set. Since $\mathrm{d}_{\gamma}(p_i,p_j)\ge C\lambda^{-1/6+\varepsilon}\gg \lambda^{-1/3}$ for $i\ne j$, the neighborhoods $N_{\lambda^{-1/3}}(p_i)\cap\gamma$ are disjoint. Therefore, for $2\le p\le4$, taking $t_j\ge 0$, we have
\[
\begin{aligned}
\Bigl\|\sum_{j\in J} t_j\,|g_j|^{2}\Bigr\|_{L^{p/2}(\gamma)}
&\ge
\Biggl(\sum_{j\in J}\int_{N_{\lambda^{-1/3}}(p_j)\cap\gamma}
|t_j|^{p/2}\,|g_j|^{p}\,d\sigma\Biggr)^{2/p}\\
&\gtrsim
\Bigl(\sum_{j\in J}|t_j|^{p/2}\,\lambda^{p/3}\,\lambda^{-1/3}\Bigr)^{2/p}\\
&\sim
\lambda^{\frac{2}{3}-\frac{2}{3p}}\,
\|\{t_j\}\|_{l^{p/2}(J)}.
\end{aligned}
\]

\bibliography{mybibfile}
\bibliographystyle{alpha}

\end{document}